\documentclass[a4paper,12pt]{article}

\usepackage[utf8]{inputenc}
\usepackage{amsfonts}
\usepackage{amssymb}
\usepackage{amsthm}
\usepackage{amsmath}

\usepackage{enumerate} 
\usepackage{enumitem} 

\usepackage[a4paper,twoside,top=2.5cm, bottom=2cm, left=2.3cm, right=2.3cm]{geometry} 

\usepackage{graphicx}
\usepackage{dsfont}
\usepackage{authblk}

\usepackage{color}
\definecolor{hanblue}{rgb}{0.27, 0.42, 0.81}
\definecolor{red}{rgb}{1.0, 0.0, 0.0}
\usepackage[colorlinks, citecolor=blue,linkcolor=blue, urlcolor = blue]{hyperref}

\usepackage[english,capitalize]{cleveref}

\allowdisplaybreaks

\newcommand*{\uthe}{\mathbf{u}(\theta)}

\newcommand*{\R}{\mathbb{R}}

\newcommand*{\de}{\,\ensuremath{\mathrm d}}

\theoremstyle{plain}
\newtheorem{thm}{Theorem}[section]
\newtheorem{lem}[thm]{Lemma}
\newtheorem{prop}[thm]{Proposition}

\theoremstyle{definition}
\newtheorem{defn}[thm]{Definition}

\theoremstyle{remark}
\newtheorem{rem}[thm]{Remark}

\numberwithin{equation}{section}

\begin{document}

\title{Polynomial-order oscillations in geometric discrepancy}
\author{Thomas Beretti}
\affil{International School for Advanced Studies (SISSA)}
\date{\today}
\maketitle

\begin{abstract}
\noindent
Let $C\subset\mathbb{R}^2$ be a convex body, and for a positive integer $N$, let $\mathcal{P}$ be a configuration of $N$ points in $[0,1)^2$. The discrepancy of $\mathcal{P}$ with respect to $C$ is defined by
    \begin{equation*}
    \mathcal{D}(\mathcal{P},\, C)=\sum_{\mathbf{p}\in\mathcal{P}}\sum_{\mathbf{n}\in\mathbb{Z}^2}\mathds{1}_C(\mathbf{p}+\mathbf{n})-N|C|,
\end{equation*}
and one may estimate how $\mathcal{P}$ deviates from uniformity by averaging the latter quantity over a family of sets. When considering quadratic averages over translated and dilated copies of $C$, one gets the \textit{homothetic quadratic discrepancy}
\begin{equation*}
    \mathcal{D}_2(\mathcal{P},\, C)=\int_{0}^{1}\int_{[0,1)^2}\left|\mathcal{D}( \mathcal{P},\,\boldsymbol{\tau}+\delta C)\right|^2\,{\rm d}\boldsymbol{\tau}\,{\rm d} \delta.
\end{equation*}
We investigate the behaviour of the optimal \textit{homothetic quadratic discrepancy}, that is
\begin{equation*}
    \inf_{\# \mathcal{P}=N} \mathcal{D}_2(\mathcal{P},\, C)\quad\text{as}\quad N\to+\infty.
\end{equation*}
Beck~\cite{MR915529} and Beck and Chen~\cite{MR1489133} showed that the optimal \textit{h.q.d.} of convex polygons has an order of growth of $\log N$, and more recently, Brandolini and Travaglini~\cite{MR4358540} proved that the optimal \textit{h.q.d.} of planar convex bodies with a $\mathcal{C}^2$ boundary has an order of growth of $N^{1/2}$. We show that, in general, a single order of growth for the optimal \textit{h.q.d.} need not exist. First, by an implicit geometric construction of $C$, we obtain prescribed oscillations between $\log N$ and $N^{1/2}$. Second, by a subtler design of $\partial C$ and via Fourier-analytic methods, we obtain prescribed polynomial-order oscillations in the range $N^\alpha$ with $\alpha\in(2/5,1/2)$. Moreover, we show that the set of planar convex bodies whose optimal \textit{h.q.d.} does not admit a single order of growth is residual in the (Hausdorff) metric space of planar convex bodies.
\end{abstract}

\tableofcontents

\section{Introduction}

We define the two-dimensional torus as $\mathbb{T}^2=\mathbb{R}^2/\mathbb{Z}^2$,
and we identify $[0,1)^2$ with $\mathbb{T}^2$ via the quotient map $\boldsymbol{\tau}\mapsto \boldsymbol{\tau}+\mathbb{Z}^2$. Let $C$ be a planar convex body (that is, a compact convex set with non-empty interior), and let $\mathds{1}_{C}$ stand for its characteristic (indicator) function. Consider the periodization functional ${\mathfrak{P}}\colon L^1(\mathbb{R}^2)\to L^1(\mathbb{T}^2)$ defined by
\begin{equation*}
    {\mathfrak{P}}\{\mathds{1}_C\}(\mathbf{x})=\sum_{\mathbf{n}\in\mathbb{Z}^2}\mathds{1}_C(\mathbf{x}+\mathbf{n}).
\end{equation*}

The theory of irregularities of distribution, often referred to as discrepancy theory, quantifies how point distributions in a space deviate from uniformity (see \cite{MR2683232} and \cite{MR4391422} for a general overview and applications). In geometric discrepancy theory, historically initiated by Roth's seminal work~\cite{MR66435}, one measures such a difference using sets as test functions and compares the number of points that fall within them with the expected value. We introduce the following notions of discrepancy.
\begin{defn}
Let $C\subset\mathbb{R}^2$ be a convex body, and let $\mathcal{P}_N$ be a configuration of $N$ points (that is, a collection of $N$ not necessarily distinct points counted with multiplicity) in $\mathbb{T}^2$. We define the \emph{discrepancy} of $\mathcal{P}_N$ with respect to $C$ as 
    \begin{equation}\label{Discrepancy1}
    \mathcal{D}(\mathcal{P}_N,\, C)=\sum_{\mathbf{p}\in\mathcal{P}_N}{\mathfrak{P}}\{\mathds{1}_C\}(\mathbf{p})-N|C|.
\end{equation}
Let $\boldsymbol{\tau}\in[0,1)^2$ be a translation parameter and let $\delta\geq0$ be a dilation parameter. We define the \emph{homothetic quadratic discrepancy} (in short, \textit{h.q.d.}) of $\mathcal{P}_N$ with respect to $C$ as
\begin{equation}\label{Discrepancy2}
    \mathcal{D}_2(\mathcal{P}_N,\, C)=\int_{0}^{1}\int_{\mathbb{T}^2}\left|\mathcal{D}( \mathcal{P}_N,\,\boldsymbol{\tau}+\delta C)\right|^2\de\boldsymbol{\tau}\de \delta,
\end{equation}
where $\de\boldsymbol{\tau}$ is the normalised Haar measure on $\mathbb T^2$ and $\de\delta$ is the Lebesgue measure on $[0,1]$.
\end{defn}

It is a classical question in the theory of irregularities of distribution to determine the optimal behaviour of the \textit{h.q.d.} as $N$ (the number of samples per unit square) goes to infinity. In other words, we are concerned with the quantity
\begin{equation*}
    \inf_{\# \mathcal{P}=N} \mathcal{D}_2(\mathcal{P},\, C)\quad\text{as}\quad N\to+\infty.
\end{equation*}
Surprisingly, the qualitative behaviour of this quantity depends strongly on the geometry of $C$: for instance, when $C$ is the unit square one obtains a logarithmic order of growth, while when $C$ is the unit disk one obtains a polynomial order. We now present sharp estimates for more general bodies, but first, we introduce suitable notation for orders of growth.

Let $f$ and $g$ be two functions defined on the same domain, and let $q$ be an object; we write $f\lesssim_q g$ to indicate that there exists a positive value $c$, that may depend on $q$, such that $f\leq cg$ in the whole domain. If we omit $q$ in the subscript, then $c$ is a positive constant. If the $\lesssim_q$ (or $\lesssim$) holds in both senses, then we write $f\approx_q g$ (or $f\approx g$).

First, we state a result on the optimal \textit{h.q.d.} with respect to polygons. Throughout the paper, we assume that $N$ is an integer greater than $1$.
\begin{thm}[Beck-Chen]\label{BC}
    Let $C$ be a convex polygon. Then, it holds
    \begin{equation*}
        \inf_{\# \mathcal{P}=N} \mathcal{D}_2(\mathcal{P},\, C)\approx_C \log N.
    \end{equation*}
\end{thm}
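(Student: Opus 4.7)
The plan is to use Fourier analysis on the torus. First, by expanding $\mathfrak{P}\{\mathds{1}_{\delta C}\}(\mathbf{x} - \boldsymbol{\tau})$ as a Fourier series in $\boldsymbol{\tau}$, isolating the zero mode (which cancels $N\delta^2|C|$), applying Parseval on $\mathbb{T}^2$, and then integrating in $\delta$, one obtains the identity
\begin{equation*}
\mathcal{D}_2(\mathcal{P}_N, C) \;=\; \sum_{\mathbf{m} \in \mathbb{Z}^2 \setminus \{0\}} \bigl|S_{\mathcal{P}_N}(\mathbf{m})\bigr|^2 \, W_C(\mathbf{m}),
\end{equation*}
where $S_{\mathcal{P}_N}(\mathbf{m}) := \sum_{\mathbf{p} \in \mathcal{P}_N} e^{-2\pi i \mathbf{m} \cdot \mathbf{p}}$ is the exponential sum of $\mathcal{P}_N$ and $W_C(\mathbf{m}) := \int_0^1 \delta^4 \, |\widehat{\mathds{1}_C}(\delta \mathbf{m})|^2 \de\delta$ is a deterministic weight depending only on $C$. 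The whole problem reduces to bounding this weighted exponential sum from both sides.

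The second step is to analyse $W_C$ when $C$ is a convex polygon. Stokes' theorem, applied to the vertex--edge combinatorics of $C$, yields a closed-form expression for $\widehat{\mathds{1}_C}$ as a finite sum indexed by the vertices. Extracting decay from this formula gives two regimes: $|\widehat{\mathds{1}_C}(\boldsymbol{\xi})| \lesssim_C |\boldsymbol{\xi}|^{-2}$ whenever $\boldsymbol{\xi}/|\boldsymbol{\xi}|$ is bounded away from every outward edge normal $\mathbf{u}(\theta_j)$ of $C$, and $|\widehat{\mathds{1}_C}(\boldsymbol{\xi})| \lesssim_C |\boldsymbol{\xi}|^{-1}$ with an oscillatory prefactor along directions parallel to some $\mathbf{u}(\theta_j)$. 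After averaging over $\delta \in [0,1]$, which smooths these oscillations, one finds that $W_C(\mathbf{m})$ decays as $|\mathbf{m}|^{-4}$ in generic directions but is enhanced to $|\mathbf{m}|^{-2}$ along a finite collection of narrow cones about the edge normals. These ``ridges'' are responsible for the logarithmic growth in both the upper and the lower bound.

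For the upper bound, I would exhibit an explicit good point set --- for instance a Davenport-type Kronecker lattice whose generators are chosen to have Diophantine properties compatible with the edge-normal directions of $C$. Arithmetic rigidity of such $\mathcal{P}_N$ forces $|S_{\mathcal{P}_N}(\mathbf{m})|^2$ to be concentrated on a sparse set of resonant frequencies and small elsewhere, so the Fourier sum may be evaluated term by term. Dyadic summation then contributes $O_C(1)$ from the generic $|\mathbf{m}|^{-4}$ region and $O_C(\log N)$ from the ridges, where the enhanced $|\mathbf{m}|^{-2}$ weight combines with the logarithmic density of resonances along each ridge.

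The lower bound follows a Roth-style Fourier argument. Decompose frequency space dyadically by scale $|\mathbf{m}| \in [2^k, 2^{k+1})$ for $k = 1, \ldots, \lfloor \tfrac{1}{2}\log_2 N \rfloor$, and within each annulus restrict to the narrow cones about the edge normals where $W_C$ is largest. A pigeonhole/large-sieve argument --- the Fourier-analytic counterpart of Roth's orthogonal-function decomposition for axis-aligned boxes --- combined with the constraint $\#\mathcal{P}_N = N$ forces at least one cone to contribute an amount of order $1$ at each scale. Summing over the $\sim \log N$ scales yields $\mathcal{D}_2(\mathcal{P}_N, C) \gtrsim_C \log N$. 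The main obstacle is precisely this pigeonhole: one must rule out the possibility that $|S_{\mathcal{P}_N}(\mathbf{m})|^2$ is simultaneously very small on every relevant cone at some scale, and this uses crucially the fact that a polygon has at least three non-parallel edge normals --- in contrast with the smooth or piecewise-smooth cases, where this argument breaks down and the much larger $N^{1/2}$ or $N^{2/5}$ rates appear.
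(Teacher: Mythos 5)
Theorem~\ref{BC} is not proved in this paper: it is stated as a known result and cited directly to Beck and Chen \cite{MR1489133}, so there is no in-text proof to compare your sketch against. That said, your outline is consistent with the Fourier machinery the paper does build for its own Theorems~\ref{main1} and~\ref{main2}. Your identity $\mathcal{D}_2(\mathcal{P},C)=\sum_{\mathbf{m}\neq\mathbf{0}}|S_{\mathcal{P}}(\mathbf{m})|^2\,W_C(\mathbf{m})$ is exactly the paper's \eqref{FourVer}, since $\widehat{\mathds{1}}_{\delta C}(\mathbf{m})=\delta^2\,\widehat{\mathds{1}}_C(\delta\mathbf{m})$ makes your $W_C(\mathbf{m})=\int_0^1\delta^4|\widehat{\mathds{1}}_C(\delta\mathbf{m})|^2\de\delta$ coincide with $\int_0^1|\widehat{\mathds{1}}_{\delta C}(\mathbf{m})|^2\de\delta$; and your lower-bound scheme is a dyadic, scale-by-scale version of the Cassels--Montgomery bound (Lemma~\ref{C-M}) that the paper applies in a single annulus for the polynomial regimes. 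The heuristic picture of the weight --- $|\mathbf{m}|^{-4}$ generically, enhanced to $|\mathbf{m}|^{-2}$ in narrow cones about the edge normals --- also matches what Lemma~\ref{L1} and the chord geometry of Definition~\ref{Corde1} give for a polygon, where $|K_C(\theta,\lambda)|\sim\lambda$ generically but $|K_C(\theta,\lambda)|\sim 1$ at an edge normal.

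Two corrections on the details. Your assertion that the lower-bound pigeonhole ``uses crucially the fact that a polygon has at least three non-parallel edge normals'' is not right: a square or rectangle has only two distinct edge-normal directions, and the paper's introduction explicitly notes that the unit square already yields the logarithmic rate. What distinguishes the polygon case is that the set of edge normals is \emph{finite}, so the $|\mathbf{m}|^{-2}$ ridges have zero angular density; this is what makes a good lattice exist for the upper bound and what confines the Cassels--Montgomery contribution to $O(1)$ per dyadic scale rather than a full power of~$N$. Second, the upper bound via a Diophantine (Davenport/Kronecker-type) lattice aligned with the edge slopes is indeed the right idea, but ``evaluated term by term'' understates the work: one must control the resonant set uniformly over all edges, which is where the bulk of Beck and Chen's argument lives. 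As a blind sketch the architecture is sound; none of its key steps is supplied in this paper, which quotes the theorem as an input.
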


In particular, the lower bound is due to a classical work of Beck~\cite{MR915529}, while the upper bound follows from a subsequent work of Beck and Chen~\cite{MR1489133}.

In contrast with the latter result, one has the following result on the optimal \textit{h.q.d.} for convex bodies with sufficiently regular boundaries.

\begin{thm}[Brandolini-Travaglini]\label{BT1}
    Let $C$ be a planar convex body. If the boundary of $C$ is $\mathcal{C}^2$ (regardless of curvature), then  it holds
    \begin{equation*}
        \inf_{\# \mathcal{P}=N} \mathcal{D}_2(\mathcal{P},\, C)\approx_C N^{1/2}.
    \end{equation*}
\end{thm}
The upper bound is well-known and can be easily shown via a probabilistic argument employing a randomly shifted lattice (for example, see \cite{MR3897016}). On the other hand, the lower bound is due to a recent work of Brandolini and Travaglini~\cite{MR4358540} and requires subtle Fourier-analytic techniques; these provide a starting point for our work. In the same paper, the following result is shown, notably recovering a polynomial order of growth across an interval of exponents.

\begin{thm}[Brandolini-Travaglini]\label{BT2}
    Let $\alpha\in[2/5,1/2)$. Then, there exists a planar convex body $C_\alpha$ such that
    \begin{equation*}
        \inf_{\# \mathcal{P}=N} \mathcal{D}_2(\mathcal{P},\, C_\alpha)\approx_\alpha N^\alpha.
    \end{equation*}
\end{thm}

In the current paper, we aim to show that, in general, the optimal \textit{h.q.d.} does not need to exhibit a single order of growth, and prescribed stationary orders of growth can be achieved. For this purpose, we present two general methods for constructing special sets whose optimal \textit{h.q.d.} continuously switch between different orders of growth (logarithmic or polynomial).

Our first method employs a simple geometric argument. This allows prescribed oscillations from a logarithmic order of $\log N$ to a polynomial order of $N^{1/2}$, and vice versa. We state our result as follows.
\begin{thm}\label{main1}
    Let $\left\{\alpha_i\right\}_{i\in\mathbb{N}}$ be in $\{0,1/2\}$, let $\left\{\varepsilon_i\right\}_{i\in\mathbb{N}}$ be in $(0,\varepsilon]$ for some small positive $\varepsilon$, and let $\left\{q_i\right\}_{i\in\mathbb{N}}$ be positive integers. Then there exists a planar convex body $C$ and an increasing sequence $\left\{ N_i\right\}_{i\in\mathbb{N}}\subset\mathbb{N}$ such that, for every $i\in\mathbb{N}$ such that $\alpha_i=0$, it holds
    \begin{equation*}
        \log^{1-\varepsilon_i}N\leq\inf_{\# \mathcal{P}=N}\mathcal{D}_2(\mathcal{P},\, C)\leq \log^{1+\varepsilon_i}N\quad\text{for every}\quad N\in[N_i,N_i+q_i],
    \end{equation*}
    while, for every $i\in\mathbb{N}$ such that $\alpha_i=1/2$, it holds
    \begin{equation*}
        N^{1/2-\varepsilon_i}\leq\inf_{\# \mathcal{P}=N}\mathcal{D}_2(\mathcal{P},\, C)\leq N^{1/2+\varepsilon_i}\quad\text{for every}\quad N\in[N_i,N_i+q_i].
    \end{equation*}
\end{thm}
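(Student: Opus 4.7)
The plan is to build $C$ by a multi-scale geometric construction in which $\partial C$ carries, at a carefully chosen length scale $\eta_i$ comparable to $N_i^{-1/2}$, a distinguished local feature of the ``type'' associated to $\alpha_i$: a straight segment when $\alpha_i=0$, a sharp corner when $\alpha_i=2/5$, and a $\mathcal{C}^2$ arc of non-vanishing curvature when $\alpha_i=1/2$. The sequence $\{N_i\}$ will be chosen to grow fast enough that the scales $\eta_i$ are quantitatively well separated. Starting from a simple base body (for instance a smoothed square), the feature $\mathcal{F}_i$ is inserted inductively into $\partial C_{i-1}$ by replacing a short arc of length $\sim\eta_i$, preserving convexity and all previously installed features; $C$ is then taken to be the Hausdorff limit of the sequence $C_i$.

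For the upper bound on the interval $[N_i,N_i+q_i]$, the strategy is to choose a point set $\mathcal{P}$ adapted to the scale $\eta_i$---a jittered or rotated lattice of spacing $\sim N^{-1/2}$---and to argue that the upper bound proofs of \cref{BC,BT} localise around $\mathcal{F}_i$ to yield $\mathcal{D}_2(\mathcal{P},C)\lesssim N^{\alpha_i+\varepsilon_i}$ (respectively $\log^{1+\varepsilon_i}N$ when $\alpha_i=0$). For the lower bound, the contribution to the h.q.d.\ coming from the piece of $\partial C$ where $\mathcal{F}_i$ lives will be isolated via a Fourier decomposition, and the matching $L^2$ lower bound of \cref{BC,BT} applied to this local piece alone. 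Here the average over dilations $\delta\in[0,1]$ is naturally restricted to the range in which $\delta\eta_i$ is comparable to $N^{-1/2}$, that is, the range in which $\mathcal{F}_i$ is actually resolved by an $N$-point set.

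The hard part will be the cross-scale interference: one must verify that the features $\mathcal{F}_j$ with $j\neq i$ do not contaminate the estimates at the scale relevant to $\mathcal{F}_i$. I expect this to follow from decomposing $\widehat{\mathds{1}_{\delta C}}$ into pieces supported near each scale, using the sharp Fourier decay of the indicator of a convex body near corners (order $|\xi|^{-2}$), near $\mathcal{C}^2$ arcs with positive curvature (order $|\xi|^{-3/2}$), and near straight edges (anisotropic decay), together with the angular localisation provided by stationary phase. These considerations, combined with the requirement that the estimates hold on the whole interval $[N_i,N_i+q_i]$ rather than at a single $N$, will dictate how rapidly the sequence $\{N_i\}$ must grow---concretely, $N_{i+1}$ should dominate an explicit function of $N_1,\dots,N_i$, $q_1,\dots,q_i$, and $\varepsilon_1,\dots,\varepsilon_i$ that comes out of the localisation argument, which is the mechanism by which the whole construction can be pushed through inductively.
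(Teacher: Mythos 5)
Your proposal goes after the problem with the heavy machinery of the paper's \emph{second} method, and along the way invents difficulties that the paper's proof of Theorem~\ref{main1} simply avoids. The paper's argument for this theorem is a black-box approximation argument, not a direct Fourier analysis of a multi-scale body. Concretely, the key ingredient is a stability lemma (Lemma~\ref{UseLem}): if $A\supset B$ with $\mathrm{diam}(A)\le 1$ and $|A\setminus B|\le\eta$, then for any $N$-point set $\mathcal{P}$ one has $|\mathcal{D}_2(\mathcal{P},A)-\mathcal{D}_2(\mathcal{P},B)|\le 2N^2\eta^{1/2}$. The construction is then: take a nested sequence $C_1\subset C_2\subset\cdots$ of convex bodies, where $\partial C_i$ is a polygon, piecewise-$\mathcal{C}^2$ with a corner, or $\mathcal{C}^2$ according to $\alpha_i$, and where $|C_{i+1}\setminus C_i|$ is tiny. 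For each $C_i$, Theorems~\ref{BC} and~\ref{BT} give $\inf_{\#\mathcal{P}=N}\mathcal{D}_2(\mathcal{P},C_i)\approx_{C_i}H_i(N)$, so one can pick $N_i$ large enough that the implicit constants are absorbed into $\pm\varepsilon_i$ on $[N_i,N_i+q_i]$; then one takes $\eta_i\le (N_i+q_i)^{-4}$, so that $|\mathcal{D}_2(\mathcal{P},C)-\mathcal{D}_2(\mathcal{P},C_i)|\le 2N^2\eta_i^{1/2}\le 2$ throughout that window, and one is done. No spectral localisation, no cross-scale estimates—the content of Theorems~\ref{BC}--\ref{BT} is used only as a black box, and all the hard analysis has already been done there.

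The gap in your plan is precisely what you flag as ``the hard part'': cross-scale interference. Your strategy requires, for each $i$, a matching lower bound that isolates the contribution of a single local feature $\mathcal{F}_i$ at a single scale in the limit body $C$. But the lower bounds in Theorems~\ref{BC} and~\ref{BT} are global statements about a fixed body with a fixed regularity class; they do not readily ``localise around $\mathcal{F}_i$'' in a body whose boundary simultaneously exhibits flat, conical, and smooth behaviour at all scales. Making that rigorous would require redeveloping the Cassels--Montgomery argument and the Fourier decay estimates for this specific multi-scale body—essentially the work the paper invests in its second method for proving Theorem~\ref{main2}, and there it only treats a single type of feature (monomial cusps), restricted to exponents strictly between $2/5$ and $1/2$. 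Also be careful about one local heuristic: a straight boundary segment by itself does not give $\log N$ discrepancy; the logarithmic bound for polygons is a global phenomenon involving the whole boundary and its corners, so ``installing a straight segment'' is not a local surrogate for ``being a polygon.'' The approximation route sidesteps all of this by never needing to analyse $C$ directly.
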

This first method obtains the planar convex body $C$ as the limit of an increasing sequence of planar convex bodies; thus, $C$ is constructed implicitly. The simplicity of our first method comes at the cost of limited knowledge on the geometry of the boundary of $C$. In particular, we rely on Lemma~\ref{UseLem}, showing that two bodies with a small symmetric difference have close discrepancies. Further, by pairing the just-mentioned lemma with a classical topological argument, we show that the set of planar convex bodies whose optimal \textit{h.q.d.} has a single order of growth is meagre in the space of planar convex bodies endowed with the Hausdorff metric.

On the other hand, our second method provides a direct construction. This employs harmonic analysis techniques and subtle geometric estimates on the Fourier transform of sets. We obtain prescribed polynomial-order oscillations in the range $N^\alpha$ with $\alpha\in(2/5,1/2)$, and state our second result as follows.
\begin{thm}\label{main2}
    Let $\left\{\alpha_i\right\}_{i\in\mathbb{N}}$ be in $(2/5,1/2)$, let $\left\{\varepsilon_i\right\}_{i\in\mathbb{N}}$ be in $(0,\varepsilon]$ for some small positive $\varepsilon$, and let $\left\{q_i\right\}_{i\in\mathbb{N}}$ be positive integers. Then there exists a planar convex body $C$ and an increasing sequence $\left\{ N_i\right\}_{i\in\mathbb{N}}\subset\mathbb{N}$ such that, for every $i\in\mathbb{N}$, it holds
    \begin{equation*}
        N^{\alpha_i-\varepsilon_i}\leq\inf_{\# \mathcal{P}=N}\mathcal{D}_2(\mathcal{P},\, C)\leq N^{\alpha_i+\varepsilon_i}\quad\text{for every}\quad N\in[N_i,N_i+q_i].
    \end{equation*}
\end{thm}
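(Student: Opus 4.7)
The goal is to construct $C$ as a single convex body whose boundary contains, at a carefully chosen increasing sequence of scales $\{L_i\}$, rescaled copies of the distinguished boundary arcs used by Brandolini--Travaglini \cite{MR4358540} to produce the single-order bodies $C_{\alpha_i}$. Matching $L_i^{-1}$ to the frequency radius $\sqrt{N_i}$ ensures that, at sample size $N\in[N_i,N_i+q_i]$, the Fourier-analytic contribution to $\mathcal{D}_2$ is dominated by the $i$-th arc and reproduces the order $N^{\alpha_i}$ up to a factor $N^{\pm\varepsilon_i}$.

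\textbf{Fourier decomposition.} Parseval on $\mathbb{T}^2$ combined with the dilation identity $\widehat{\mathds{1}_{\delta C}}(\mathbf{k})=\delta^2\widehat{\mathds{1}_C}(\delta\mathbf{k})$ yields
\begin{equation*}
    \mathcal{D}_2(\mathcal{P},\, C) = \sum_{\mathbf{k}\in\mathbb{Z}^2\setminus\{\mathbf{0}\}} \left|\widehat{\mathcal{P}}(\mathbf{k})\right|^2 \int_0^1 \delta^4\left|\widehat{\mathds{1}_C}(\delta\mathbf{k})\right|^2 \de\delta,
\end{equation*}
so that, by the standard reductions used in the proofs of \cref{BC} and \cref{BT}, the order of $\inf_{\#\mathcal{P}=N}\mathcal{D}_2(\mathcal{P},\, C)$ is governed by the annular averages $A_C(R):=\int_0^1\delta^4\sum_{|\mathbf{k}|\asymp R}|\widehat{\mathds{1}_C}(\delta\mathbf{k})|^2\de\delta$ evaluated at $R\asymp\sqrt{N}$; for the single-order body $C_{\alpha_i}$ one has $A_{C_{\alpha_i}}(R)\asymp R^{-2(1+\alpha_i)}$, up to logarithmic factors.

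\textbf{Construction.} I would take $\partial C$ to consist of arcs $\Gamma_i$ of linear size $L_i$, each a suitable affine rescaling of the distinguished arc of $C_{\alpha_i}$, glued together by short convex transition pieces. The scales are chosen so that $L_i\asymp N_i^{-1/2}$, with $N_{i+1}/N_i$ growing very rapidly as a function of $q_{i+1}$ and $\varepsilon_{i+1}$; moreover, each $\Gamma_i$ is placed near a support line whose normal direction varies slowly with $i$, so that the local convexity of the assembled boundary is automatic. The central geometric claim is then that in the frequency band $|\mathbf{k}|\asymp L_i^{-1}$ the total annular average satisfies $A_C(R)\asymp R^{-2(1+\alpha_i)}$, exactly as for the stand-alone $C_{\alpha_i}$, from which the two-sided bound of the theorem follows by the same upper-bound constructions (e.g.\ jittered rotated lattices) and lower-bound arguments employed by Brandolini--Travaglini.

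\textbf{Main obstacle.} The hard part is the separation-of-scales estimate: one must show that arcs $\Gamma_j$ with $j\neq i$ give a negligible contribution to $A_C(R)$ in the target frequency range. Pieces with $L_j\ll L_i$ are too small to resonate with $|\mathbf{k}|\asymp L_i^{-1}$ and contribute essentially via their $L^1$-mass $L_j^2$ only, which is tiny; pieces with $L_j\gg L_i$ lie deep in their far-field oscillatory regime, where stationary-phase / van der Corput bounds on their Fourier transforms combined with annular averaging force their contribution to be super-polynomially small. Quantifying these two decays sharply enough to cover the full width $q_i$ of each interval within the $\pm\varepsilon_i$ window, while simultaneously preserving convexity of $C$ and the precise exponent $\alpha_i$ on $\Gamma_i$, is the crux of the argument and dictates the recursive choice of $L_i$, $N_i$, and the transition arcs between consecutive pieces.
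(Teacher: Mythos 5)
Your high-level strategy — embed scale-separated arcs in $\partial C$ and match frequency bands to sample sizes — is in the same spirit as the paper, but both the construction and the key estimates diverge, and two of your claimed estimates are incorrect. The paper does \emph{not} place rescaled copies of the Brandolini--Travaglini arcs at distinct boundary locations. Instead, it nests all the special behaviour at a \emph{single} boundary point (the origin, and its antipode): one glues together, in a $\mathcal{C}^2$ fashion, monomial arcs $y=x^{\beta_i}$ with $\beta_i=2\alpha_i/(2-3\alpha_i)$, each covering a curvature range $[k_{i-1},k_i]$ with $k_i\to\infty$, so that at the $i$-th frequency band $\rho\in[\rho_{i,1},\rho_{i,2}]$ the chord $|K_C(\theta+\pi/2,\lambda)|$ for $|\theta|$ small behaves like that of the model body $A(\beta_i)$ (Lemma~\ref{L2}), while for $|\theta|$ bounded away from $0$ the boundary is a fixed $\mathcal{C}^2$ curve and the chord is $\approx\lambda^{1/2}$. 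With this design there is only ever \emph{one} angular direction that is ``special'' at any frequency scale, which is what makes the lattice–orientation argument in the upper bound and the Cassels--Montgomery rectangle argument in the lower bound tractable. Your proposal, with infinitely many rescaled arcs at separate boundary locations, would produce countably many singular directions and would require you to control how each of those normal cones interacts with the test lattices; the paper sidesteps this entirely.

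Two specific estimates in your ``main obstacle'' paragraph would not survive scrutiny. First, the claim that arcs $\Gamma_j$ with $L_j\gg L_i$ contribute ``super-polynomially small'' amounts via van~der~Corput is false: the Fourier transform of the indicator of a planar convex body with boundary in $\mathcal{C}^2$ decays only like $|\boldsymbol{\xi}|^{-3/2}$ pointwise, and the dilation-averaged square (Lemma~\ref{L1}) decays like $\rho^{-3}$ — polynomially, never super-polynomially. These contributions are not negligible; they are precisely the generic $\rho^{-3}$ term that in the paper populates the region $V_3$ and contributes a share $\approx G^{-2}L^{-1}$ to the sum \eqref{e7}, which is of the same order $N^{\alpha_i}$ as the dominant term. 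Second, the assertion that sub-wavelength arcs ($L_j\ll L_i$) contribute ``essentially via their $L^1$-mass $L_j^2$'' is also misleading: at depth $\lambda\gg L_j$, the chord leaves $\Gamma_j$ and probes the surrounding boundary, so what matters is the curvature of the ambient arc at that scale; one must show this is the correct ambient $\rho^{-3}$ behaviour, which is a statement about the gluing, not about mass. The paper handles this through the monotone-curvature design of $P$ and the technical Lemma~\ref{Tec1}, which yields the angular monotonicity \eqref{EqX} and an \emph{exact} two-regime description of $\int_0^1|\widehat{\mathds{1}}_{\delta C}(\rho\,\mathbf{u}(\theta+\pi/2))|^2\,d\delta$ across the whole range of $\theta$. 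That is the device that resolves the ``crux'' you identify, and it has no analogue in your proposal.

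A smaller point: your identification $L_i^{-1}\asymp\sqrt{N_i}$ is an oversimplification of the actual frequency-window management. In the paper the relevant admissible range of $N$ at stage $i$ is $(\rho_{i,1}^{1/\alpha_i},\rho_{i,2}^{1/2})$, and the lattice parameters are $G\approx N^{(2+\beta_i)/(2+3\beta_i)}$, $L\approx N^{2\beta_i/(2+3\beta_i)}$, so the dual lattice begins resolving the annulus at $|\mathbf{m}|\gtrsim L\approx N^{\alpha_i}$, not at $\sqrt{N}$. Getting the $q_i$ consecutive integers $N\in[N_i,N_i+q_i]$ then requires the greedy decomposition into at most five lattice blocks carried out at the end of the upper-bound subsection; your outline does not address how to hit every integer $N$, only special products $GL$.
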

The second method involves constructing the boundary of $C$ “by hand”; as we will see, it suffices to design the boundary only in a small neighbourhood of a point, while leaving the remaining part as $\mathcal{C}^2$. Moreover, we note that we can alternate exponents within the same range as in Theorem~\ref{BT2}.

To the best of our knowledge, these two results settle the first examples of sets for which the optimal \textit{h.q.d.} has prescribed stationary orders of growth. Interestingly, if one further considers full rotations in the averaging process (that is, averaging over the whole similarity group), it is a classical result, obtained independently by Beck~\cite{MR0906524} and Montgomery~\cite[Ch.~6]{MR1297543},  that one always obtains an order of growth of $N^{1/2}$. The reader may consult a recent work by Gennaioli and the author \cite{beretti2024fouriertransformbvfunctions} for further details on the interplay between discrepancy and geometric measure theory.

In the pages that follow, Section~\ref{S1} contains the implicit geometric construction that leads to Theorem~\ref{main1}, while Section~\ref{S2} contains the direct construction that proves Theorem~\ref{main2}. In particular, the proof of Theorem~\ref{main2} requires different techniques to establish lower and upper bounds, and therefore, these are treated in distinct subsections.

    \section{The first method}\label{S1}
    Recalling the hypotheses of Theorem~\ref{main1}, we have that $\left\{\alpha_i\right\}_{i\in\mathbb{N}}$ is a sequence of exponents with values in $\{0,1/2\}$,  $\left\{\varepsilon_i\right\}_{i\in\mathbb{N}}$ is a (possibly decreasing) sequence in $(0,\varepsilon]$ for some small positive $\varepsilon$, and $\left\{q_i\right\}_{i\in\mathbb{N}}$ is a (possibly increasing) sequence of positive integers. We remark that $\alpha_i$ prescribes the target order of growth, while $\varepsilon_i$ and $q_i$ quantify, respectively, how closely and for how long we approximate it.
    
    We begin by presenting a simple but essential auxiliary result.

    \begin{lem}\label{UseLem}
        Let $\mathcal{P}$ be a configuration of $N$ points in $\mathbb{T}^2$, and let $A$ and $B$ be two planar sets such that $\rm{diam}(A)\leq1$, $A\supset B$, and $|A\setminus B|\leq\eta\leq1$. Then, it holds
        \begin{equation*}
            \left|\mathcal{D}_2(\mathcal{P},A)-\mathcal{D}_2(\mathcal{P},B)\right|\leq 4N^2\eta^{1/2}.
        \end{equation*}
    \end{lem}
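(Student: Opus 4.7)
The plan is to exploit the additivity of discrepancy under disjoint unions. Setting $E := A\setminus B$, the hypothesis $A\supset B$ gives $A = B \sqcup E$, and since discrepancy is linear in the indicator of the test set, for every $(\boldsymbol{\tau},\delta)$
\begin{equation*}
    \mathcal{D}(\mathcal{P},\boldsymbol{\tau}+\delta A) \;=\; \mathcal{D}(\mathcal{P},\boldsymbol{\tau}+\delta B) + \mathcal{D}(\mathcal{P},\boldsymbol{\tau}+\delta E).
\end{equation*}
Write $\mathcal{D}_A,\mathcal{D}_B,\mathcal{D}_E$ for the three discrepancies (as functions of $(\boldsymbol{\tau},\delta)$) and use the identity $\mathcal{D}_A^2 - \mathcal{D}_B^2 = (\mathcal{D}_A-\mathcal{D}_B)(\mathcal{D}_A+\mathcal{D}_B) = \mathcal{D}_E(\mathcal{D}_A+\mathcal{D}_B)$. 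Integrating and applying Cauchy--Schwarz in $L^2([0,1]\times\mathbb{T}^2)$ gives
\begin{equation*}
    \bigl|\mathcal{D}_2(\mathcal{P},A) - \mathcal{D}_2(\mathcal{P},B)\bigr| \;\leq\; \|\mathcal{D}_E\|_{2}\,\bigl(\|\mathcal{D}_A\|_2 + \|\mathcal{D}_B\|_2\bigr),
\end{equation*}
so it remains to show $\|\mathcal{D}_C\|_2^2 = \mathcal{D}_2(\mathcal{P},C)\lesssim N^2|C|$ for any measurable $C\subset A$, which applied to $C=A,B,E$ yields the final bound $\lesssim N^2\sqrt{\eta}$ (using $|E|\leq \eta\leq 1$, so also $\eta\leq\sqrt\eta$).

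The main step is this uniform estimate $\mathcal{D}_2(\mathcal{P},C)\lesssim N^2|C|$, which I would prove by integrating over $\boldsymbol{\tau}$ first. Let $h_\delta(\boldsymbol{\tau}) := \sum_{\mathbf{p}\in\mathcal{P}}\mathfrak{P}\{\mathds{1}_{\delta C}\}(\mathbf{p}-\boldsymbol{\tau})$. Fubini gives $\int_{\mathbb{T}^2}h_\delta\,d\boldsymbol{\tau}=N|\delta C|$, so
\begin{equation*}
    \int_{\mathbb{T}^2}|\mathcal{D}(\mathcal{P},\boldsymbol{\tau}+\delta C)|^2\,d\boldsymbol{\tau} \;=\; \|h_\delta\|_{L^2(\mathbb{T}^2)}^2 - (N|\delta C|)^2 \;\leq\; \|h_\delta\|_{L^2}^2.
\end{equation*}
Expanding the square and changing variables $\boldsymbol{\tau}\mapsto \mathbf{p}-\boldsymbol{\tau}$ in each pair $(\mathbf{p},\mathbf{q})$ rewrites $\|h_\delta\|_{L^2}^2$ as a sum of shifted inner products of $u:=\mathfrak{P}\{\mathds{1}_{\delta C}\}$, each bounded by $\|u\|_{L^2(\mathbb{T}^2)}^2 \leq \|u\|_{L^\infty}\|u\|_{L^1} = \|u\|_{L^\infty}|\delta C|$. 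Here the diameter hypothesis is essential: since $\text{diam}(\delta C)\leq \delta\leq 1$, any point of $\mathbb{T}^2$ is covered by only finitely many (absolute constant) integer translates of $\delta C$, so $\|u\|_{L^\infty}$ is bounded by an absolute constant. Summing over the $N^2$ pairs and integrating $\delta\in[0,1]$ produces $\mathcal{D}_2(\mathcal{P},C)\lesssim N^2|C|$, with $|\delta C| = \delta^2|C|$ contributing the harmless $\int_0^1\delta^2\,d\delta$.

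The only real obstacle is bookkeeping: the diameter hypothesis must be used to bound the periodization $\mathfrak{P}\{\mathds{1}_{\delta C}\}$ uniformly, and the various implicit constants (coming from that periodization, and from Cauchy--Schwarz) need to be tracked if one insists on the explicit prefactor $2$ rather than a generic absolute constant. Once those are carefully collected, the chain of inequalities above gives the stated $|\mathcal{D}_2(\mathcal{P},A)-\mathcal{D}_2(\mathcal{P},B)|\leq 2N^2\eta^{1/2}$.
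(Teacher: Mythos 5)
Your argument is correct, and it follows the same circle of ideas as the paper: the set-difference identity $\mathcal{D}(\mathcal{P},\boldsymbol{\tau}+\delta A)-\mathcal{D}(\mathcal{P},\boldsymbol{\tau}+\delta B)=\mathcal{D}(\mathcal{P},\boldsymbol{\tau}+\delta (A\setminus B))$, the factorisation $\mathcal{D}_A^2-\mathcal{D}_B^2=(\mathcal{D}_A-\mathcal{D}_B)(\mathcal{D}_A+\mathcal{D}_B)$ combined with Cauchy--Schwarz, and the diameter hypothesis used to control the periodisation $\mathfrak{P}\{\mathds{1}_{\delta (A\setminus B)}\}$ in $L^\infty$ so that its $L^2$ norm is governed by $|A\setminus B|$. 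The one point where you genuinely diverge is the treatment of the factor $\mathcal{D}_A+\mathcal{D}_B$: the paper swallows it through a crude pointwise bound of order $N$ (since $\mathrm{diam}(\delta C)<1$ forces the periodisation into $\{0,1\}$ a.e., so each $|\mathcal{D}(\mathcal{P},\boldsymbol{\tau}+\delta C)|\leq N$), after which only $\|\mathcal{D}_{A\setminus B}\|_2$ needs to be estimated; you instead bound $\|\mathcal{D}_A\|_2$ and $\|\mathcal{D}_B\|_2$ separately through the intermediate estimate $\mathcal{D}_2(\mathcal{P},C)\lesssim N^2|C|$, which does slightly more work but is also slightly more informative (and, if the constants are tracked --- $\|u\|_\infty\leq 2$, $\int_0^1\delta^2\,\mathrm d\delta=1/3$, $|A|\leq\pi/4$ by the isodiametric inequality --- actually lands below the advertised prefactor $2$). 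Both routes end at $O(N^2\sqrt\eta)$, and since the lemma is only invoked in the proof of Theorem~\ref{main1} through the estimate $2N^2\eta_i^{1/2}\leq 2$, where the exponent in $\eta_i\leq(N_i+q_i)^{-4}$ absorbs any absolute constant, the exact prefactor is immaterial; your caveat about constant-tracking is therefore a non-issue.
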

    \begin{proof}
        By the reverse triangle inequality, it holds
        \begin{equation}\label{U1}
            \left|\mathcal{D}_2(\mathcal{P},A)-\mathcal{D}_2(\mathcal{P},B)\right|^2\leq 4N^2\int_0^1\int_{\mathbb{T}^2}\left|\mathcal{D}(\mathcal{P},\boldsymbol{\tau}+\delta A)-\mathcal{D}(\mathcal{P},\boldsymbol{\tau}+\delta B)\right|^2\de\boldsymbol{\tau}\de\delta.
        \end{equation}
        Now, since $A\supset B$, it holds
        \begin{equation}\label{U2}
            \mathcal{D}(\mathcal{P},\boldsymbol{\tau}+\delta A)-\mathcal{D}(\mathcal{P},\boldsymbol{\tau}+\delta B)=\mathcal{D}\left(\mathcal{P},\boldsymbol{\tau}+\delta (A\setminus B)\right).
        \end{equation}
        Since $\rm{diam}(A)\leq 1$, then, for any $\mathbf{p}\in\mathbb{T}^2$, the set of $\boldsymbol{\tau}$'s such that
        \begin{equation*}
            \mathds{1}_{\boldsymbol{\tau}+\delta(A\setminus B)}(\mathbf{p})\neq 0
        \end{equation*}
        has measure $\delta^2|A\setminus B|\leq\delta^2\eta$. Therefore, it holds
        \begin{equation*}
            \int_{\mathbb{T}^2}\left|\mathds{1}_{\boldsymbol{\tau}+\delta(A\setminus B)}(\mathbf{p})\right|^2\de\boldsymbol{\tau}\leq\delta^2\eta.
        \end{equation*}
        Now, by applying the Cauchy–Schwarz inequality, we obtain
        \begin{equation*}
        \begin{split}
        \int_0^1\int_{\mathbb{T}^2}\left|\mathcal{D}\left(\mathcal{P},\,\boldsymbol{\tau}+\delta (A\setminus B)\right)\right|^2\de\boldsymbol{\tau}\de\delta&\leq2\int_0^1\int_{\mathbb{T}^2}\left|\sum_{\mathbf{p}\in\mathcal{P}}\mathds{1}_{\boldsymbol{\tau}+\delta(A\setminus B)}(\mathbf{p})\right|^2\de\boldsymbol{\tau}\de\delta+2N^2\eta^2\\
        &\leq2N\sum_{\mathbf{p}\in\mathcal{P}}\int_0^1\int_{\mathbb{T}^2}\left|\mathds{1}_{\boldsymbol{\tau}+\delta(A\setminus B)}(\mathbf{p})\right|^2\de\boldsymbol{\tau}\de\delta+2N^2\eta^2\\&\leq2N^2\eta+2N^2\eta^2\leq4N^2\eta.
        \end{split}
        \end{equation*}
        Combining the latter with \eqref{U1} and \eqref{U2} yields the claim.
    \end{proof}
    
    The latter result is all we need to proceed with the proof of Theorem~\ref{main1}. We start by considering a sequence of convex planar bodies $\left\{C_i\right\}_{i\in\mathbb{N}}$ to be chosen later, but such that each of their diameters is less than $1$ and
    \begin{equation*}
        \partial C_i\quad\text{is}\quad\begin{cases}
            \text{a polygon}&\text{if}\quad \alpha_i=0\\
            \text{a }\mathcal{C}^2\text{ curve} &\text{if}\quad \alpha_i=1/2\\
        \end{cases}.
    \end{equation*}
    Now, let $\left\{\eta_i\right\}_{i\in\mathbb{N}}$ be a (decreasing) sequence with values in $(0,1]$, but to be chosen later as well. Moreover, for every $i\in\mathbb{N}$, we further impose that $C_i\subset C_{i+1}$ and that $|C_{i+1}\setminus C_i|\leq \eta_{i+1}$.\begin{figure}
        \centering
        \includegraphics[width=0.9\linewidth]{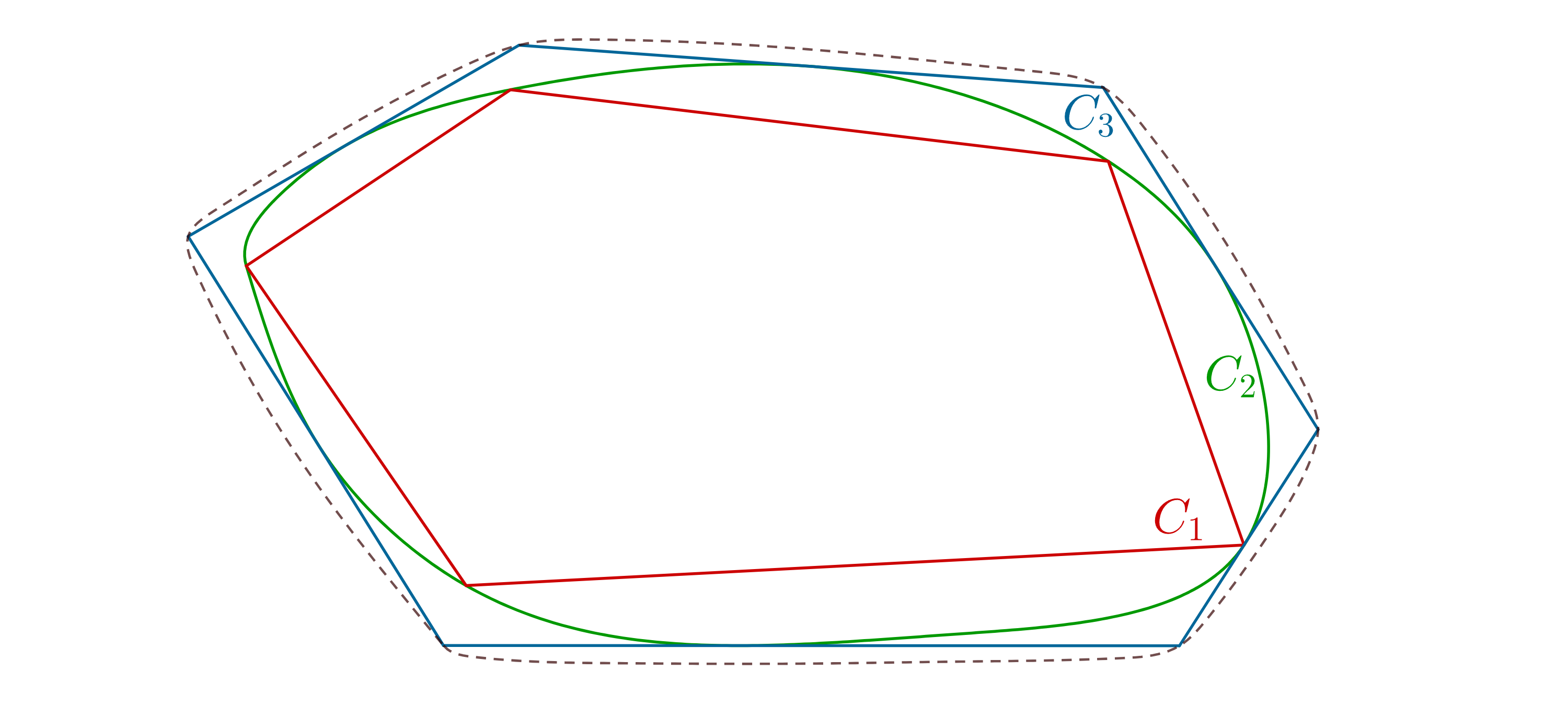}
        \caption{The construction in the proof of Theorem~\ref{main1}.}
    \end{figure}
    
    For the sake of notation, consider the function
    \begin{equation*}
        H_i(N)=\begin{cases}
            \log N &\text{if}\quad \alpha_i=0\\
            N^{1/2} &\text{if}\quad \alpha_i=1/2
        \end{cases}.
    \end{equation*}
    We recall that, by Theorem~\ref{BC} and Theorem~\ref{BT1}, for every $i\in\mathbb{N}$, it holds
    \begin{equation}\label{Estm}
        \inf_{\# \mathcal{P}=N} \mathcal{D}_2(\mathcal{P},\, C_i)\approx_{C_i}H_i(N).
    \end{equation}
    We now construct the sequence $\left\{\eta_i\right\}_{i\in\mathbb{N}}$ recursively. Suppose $\left\{\eta_j\right\}_{j=1}^{i}$ and $\left\{C_j\right\}_{j=1}^{i}$ have already been chosen. Then, let $N_i$ be such that
    \begin{equation}\label{U3}
       H_i^{1-\varepsilon_i}(N)\leq \inf_{\# \mathcal{P}=N} \mathcal{D}_2(\mathcal{P},\, C_i)\leq H_i^{1+\varepsilon_i}(N)\quad\text{for every}\quad N\in[N_i, N_i+q_i].
    \end{equation}
    In particular, notice that such an $N_i$ exists, or else it would contradict \eqref{Estm}. Then, choose $\eta_{i+1}$ in such a way that $\eta_{i+1}\leq\eta_i/2$ and 
    \begin{equation*}
        \eta_{i+1}< (N_i+q_i)^{-4}/16.
    \end{equation*}

    By choosing the whole sequence $\left\{\eta_i\right\}_{i\in\mathbb{N}}$ in such a fashion, it follows that the nested sequence of the planar convex bodies $\left\{C_i\right\}_{i\in\mathbb{N}}$ converges in the Hausdorff metric to a planar convex body $C$ such that, for every $i\in\mathbb{N}$, it holds
    \begin{equation*}
        |C\setminus C_i|\leq\sum_{j=i+1}^\infty\eta_j\leq2\eta_{i+1}.
    \end{equation*}
    Therefore, by applying Lemma~\ref{UseLem}, we obtain that, for every $i\in\mathbb{N}$, for every $N\in[N_i,N_i+q_i]$, and for every configuration $\mathcal{P}$ of $N$ points in $\mathbb{T}^2$, it holds
    \begin{equation*}
        |\mathcal{D}_2(\mathcal{P},\, C)-\mathcal{D}_2(\mathcal{P},\, C_i)|\leq 4N^2\eta_{i+1}^{1/2}<1.
    \end{equation*}
    Finally, by pairing the latter inequality with \eqref{U3}, the claim of Theorem~\ref{main1} follows.

    \subsection{The discrepancy of most convex bodies}
    What follows is inspired by \cite[Sec.~4]{MR1681584}, where the authors applied an analogous argument to study the average decay of the Fourier transform of convex sets. We let $\mathfrak{C}$ be the space of convex bodies in $\mathbb{R}^2$ endowed with the Hausdorff metric $d_H$ defined by
    \begin{equation*}
        d_H(A,B)=\max\Big\{\sup_{x\in A}\inf_{y\in B}|x-y|,\,\sup_{y\in B}\inf_{x\in A}|x-y|\Big\}\quad\text{for }A,B\in\mathfrak{C}.
    \end{equation*}
    By the Blaschke selection theorem, the metric space $(\mathfrak{C}, d_H)$ is locally compact, and by the Baire category theorem, it follows that $(\mathfrak{C}, d_H)$ is a Baire space and, in particular, it is nonmeagre.
    
    In \cite{MR185509} it is shown that the Hausdorff metric and the symmetric-difference metric $d_\Delta(A,B)=|A\Delta B|$ induce the same topology on $\mathfrak{C}$. It is not difficult to adapt the proof of Lemma~\ref{UseLem} and show that, for a fixed $N\in\mathbb{N}$, the functional
    \begin{equation*}
        \mathcal{D}_2(\mathcal{P},\, \cdot)\colon (\mathfrak{C}, d_\Delta)\to (0,+\infty)
    \end{equation*}
    is (locally) uniformly continuous in its argument with respect to all configurations $\mathcal{P}$ of $N$ points. Indeed, it is enough to pair the equality $A\Delta B=(A\setminus B)\sqcup(B\setminus A)$ with the fact that $\mathcal{D}(\mathcal{P},\, \cdot)$ is additive on disjoint sets, and note that the condition on the diameter in the hypothesis of Lemma~\ref{UseLem} is (locally) not restrictive. Hence, we infer that, for a fixed $N\in\mathbb{N}$, the functional
    \begin{equation*}
        \inf_{\# \mathcal{P}=N} \mathcal{D}_2(\mathcal{P},\, \cdot)\colon (\mathfrak{C}, d_\Delta)\to (0,+\infty)
    \end{equation*}
    is continuous. It is time to state a technical result of Gruber~\cite{MR778840}.
    \begin{lem}[Gruber]
        Let $\{a_j\}_{j\in\mathbb{N}}\subset(0,+\infty)$, let $T$ be a nonmeagre topological space, and let $\{\phi_j\}_{j\in\mathbb{N}}$ be a sequence of continuous functionals with $\phi_j\,\colon\, T\to(0,+\infty)$. Define
        \begin{align*}
            \mathcal{A}&=\big\{x\in T\;\colon\; \phi_j(x)=o(a_j)\,\textnormal{ as }\,j\to+\infty\big\},\\
            \mathcal{B}&=\big\{x\in T\;\colon\; a_j=o(\phi_j(x))\,\textnormal{ as }\,j\to+\infty\big\}.
        \end{align*}
        Then, the following holds:
        \begin{enumerate}[label=\textnormal{\roman*)}]
        
            \item If $\mathcal{A}$ is dense in $T$, then, for every $x\in T$ except for a meagre subset, the inequality $\phi_j(x)<a_j$ holds for infinitely many values of $j$.  
            \item If $\mathcal{B}$ is dense in $T$, then, for every $x\in T$ except for a meagre subset, the inequality $\phi_j(x)>a_j$ holds for infinitely many values of $j$.  
        \end{enumerate}
    \end{lem}
    Finally, by pairing Theorem~\ref{BC} and Theorem~\ref{BT1} with the latter lemma and the fact that $\inf_{\# \mathcal{P}=N} \mathcal{D}_2(\mathcal{P},\, \cdot)$ is continuous in $(\mathfrak{C}, d_\Delta)$, we obtain the following result.
    \begin{thm}
        Let $\varepsilon\in(0,1]$. Then, for every planar convex body $C\in\mathfrak{C}$ except for a meagre subset, there exist two infinite subsets $\mathcal N^-_C,\mathcal N^+_C\subset\mathbb{N}$ such that
        \begin{equation*}
            \inf_{\# \mathcal{P}=N} \mathcal{D}_2(\mathcal{P},\, C)\leq \log^{1+\varepsilon}N\quad \forall N\in \mathcal N^-_C \quad \textnormal{ and }\quad \inf_{\# \mathcal{P}=N} \mathcal{D}_2(\mathcal{P},\, C)\geq N^{1/2-\varepsilon}\quad \forall N\in \mathcal N^+_C.
        \end{equation*}
    \end{thm}

    \section{The second method}\label{S2}
The harmonic analysis arguments we are to employ in this section have been recently developed in \cite{MR4358540} and \cite{beretti2025fouriertransformplanarconvex}. These make use of a geometric quantity that we define as follows.\begin{figure}
        \centering
        \includegraphics[width=0.9\linewidth]{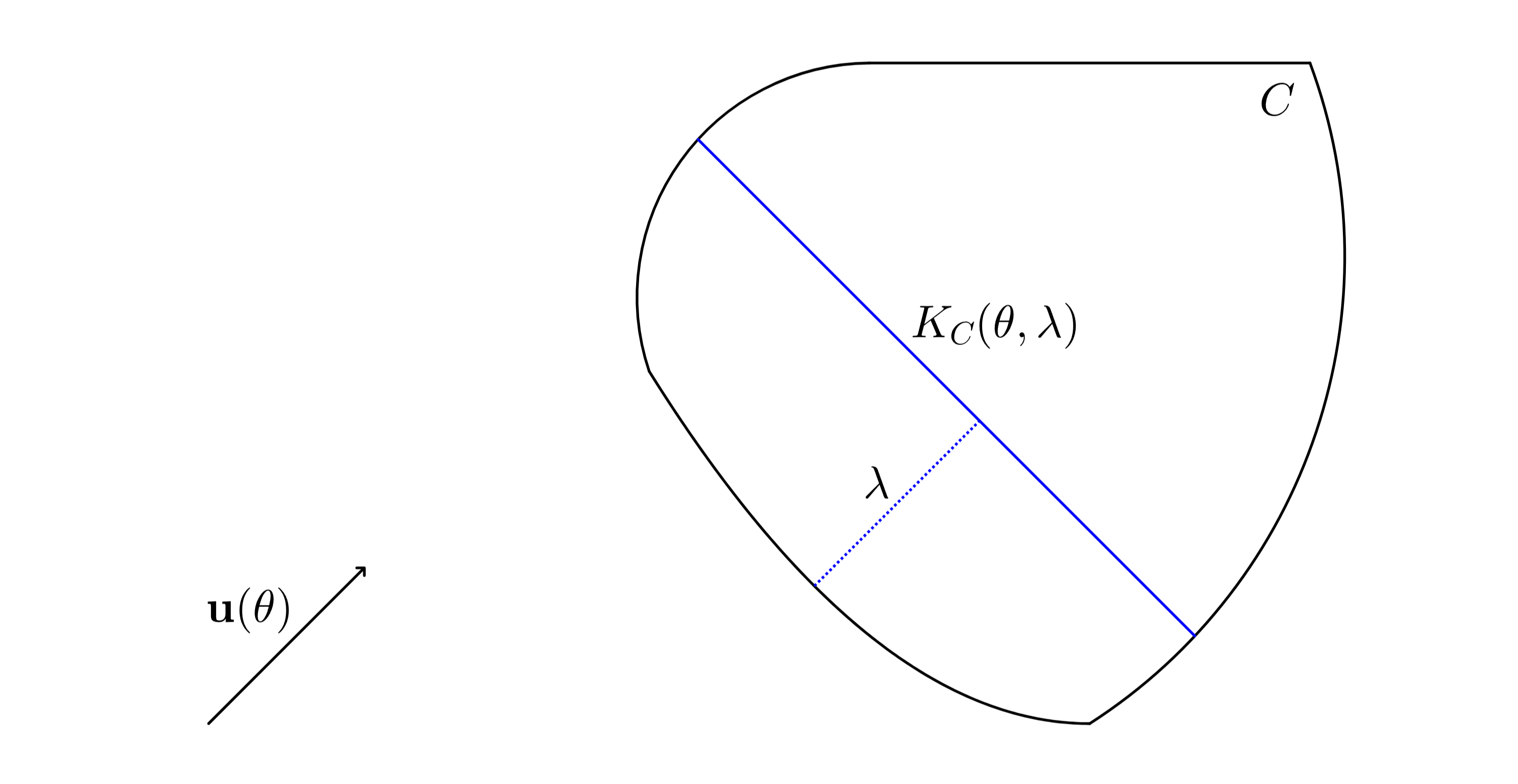}
        \caption{The chord in Definition~\ref{Corde1}.}
    \end{figure}

\begin{defn}\label{Corde1}
    Let $C\subset\mathbb{R}^2$ be a convex body. For an angle $\theta\in[0,2\pi)$ and a value $\lambda>0$, we define the \emph{chord} of $C$ in direction $\uthe=(\cos\theta, \sin\theta)$ at distance $\lambda$ as
		\begin{equation*}
		K_C(\theta,\lambda)=\left\{\mathbf{x}\in C\,\colon\,\mathbf{x}\cdot\uthe=\inf_{\mathbf{y}\in C}(\mathbf{y}\cdot\uthe)+\lambda\right\}.
		\end{equation*}
        Further, we consider its length $\left|K_C(\theta,\lambda)\right|$, and we define the quantity
		\begin{equation*}
		{\gamma}_{C}(\theta,\lambda)=\max\{\left|K_C(\theta,\lambda)\right|,\left|K_C(\theta+\pi,\lambda)\right|\}.
		\end{equation*}
\end{defn}

One may notice that the decay of $|K_C(\theta,\lambda)|$ as $\lambda\to0^+$ depends strongly on the local geometry of the (if just one) point $b\in\partial C$ whose inner normal is $\uthe$. For example, if $\partial C$ is $\mathcal{C}^2$ and has positive curvature at $b$, then $|K_C(\theta,\lambda)|$ ultimately decays as $\lambda^{1/2}$, while if $b$ is the vertex at a corner, then $|K_C(\theta,\lambda)|$ ultimately decays as $\lambda$.

Now, we define the Fourier transform of $\mathds{1}_{C}$ as
\begin{equation*}
    \widehat{\mathds{1}}_C(\boldsymbol{\xi})=\int_{C}e^{-2\pi i \mathbf{x}\cdot\boldsymbol{\xi}}\de \mathbf{x},
\end{equation*}
and state a result that relates averages of the Fourier transform of $\mathds{1}_C$ to the chords of $C$.
\begin{lem}\label{L1}
    Let $C$ be a planar convex body. Then, there exists a positive value $\lambda_C$ such that, for every angle $\theta\in[0,2\pi)$ and every $\lambda\in(0,\lambda_C]$, it holds
    	\begin{equation*}
    	\int_{0}^{1}\left|\widehat{\mathds{1}}_{\delta C}(\lambda^{-1}\,\uthe)\right|^2\de \delta\approx\lambda^{2}\gamma^2_{C}(\theta,\lambda).
    	\end{equation*}
\end{lem}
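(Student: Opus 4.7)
The plan is to reduce the claim to a one-dimensional estimate on the chord-length profile of $C$ in the direction perpendicular to $\uthe$. Writing each $\mathbf{x}\in C$ as $\mathbf{x} = s\uthe + t\duthe$ and setting $L(s) = |\{t\in\mathbb{R}\,:\,s\uthe + t\duthe\in C\}|$, Fubini yields $\widehat{\mathds{1}}_C(r\uthe) = \widehat{L}(r)$. Combined with the dilation identity $\widehat{\mathds{1}}_{\delta C}(\boldsymbol{\xi}) = \delta^2\widehat{\mathds{1}}_C(\delta\boldsymbol{\xi})$ and the change of variable $v = \delta/\lambda$, this rewrites the left-hand side of the lemma as $\lambda^{5}\int_0^{1/\lambda} v^{4}|\widehat{L}(v)|^{2}\de v$, and the claim becomes $\int_0^{1/\lambda} v^{4}|\widehat{L}(v)|^{2}\de v \approx \lambda^{-3}\gamma_C^2(\theta,\lambda)$.

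I would then split $L$ via a smooth partition of unity into three pieces $L=L_-+L_0+L_+$, supported respectively near $s=0$, in the interior, and near $s=W$, where $W$ is the width of $C$ in direction $\uthe$. By the definitions, $L_-(s) = |K_C(\theta,s)|$ and $L_+(s) = |K_C(\theta+\pi,W-s)|$ on their respective supports (modulo the cutoff); the middle piece $L_0$ is smooth with compact support, so $\widehat{L_0}$ is Schwartz and its contribution to the integral is bounded uniformly in $\lambda$. Since $L$ is concave with $L(0) = L(W) = 0$, the function $L(s)/s$ is non-increasing on $(0,W]$, and this geometric input will do all the work.

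The core estimate is $|\widehat{L_-}(v)|^2 \approx |K_C(\theta,1/v)|^2/v^2$ for $v\gtrsim 1$, and its counterpart for $L_+$. To derive it I would substitute $s=\sigma/v$ to obtain $\widehat{L_-}(v) = v^{-1}\int_0^\infty L_-(\sigma/v)\,e^{-2\pi i\sigma}\de\sigma$, and then apply a van-der-Corput-type argument exploiting the concavity of $L_-$. Inserting this back and passing to the variable $u = 1/v$ yields
\begin{equation*}
\int_0^{1/\lambda} v^{4}|\widehat{L_-}(v)|^{2}\de v \;\approx\;\int_\lambda^{c} u^{-4}|K_C(\theta,u)|^{2}\de u
\end{equation*}
for some constant $c>0$. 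On $[\lambda,2\lambda]$ the integrand is pointwise of order $\lambda^{-4}|K_C(\theta,\lambda)|^{2}$ by monotonicity of $|K_C(\theta,\cdot)|$ near $0$, producing a contribution $\approx \lambda^{-3}|K_C(\theta,\lambda)|^{2}$. On the tail $[2\lambda,c]$, the bound $|K_C(\theta,u)|\leq (u/\lambda)|K_C(\theta,\lambda)|$ (a direct consequence of the non-increasing property of $L(s)/s$) reduces the integrand to $\lambda^{-2}|K_C(\theta,\lambda)|^{2}\cdot u^{-2}$, whose integral is of the same order. Combined with the analogous bound for $L_+$, this gives $\lambda^{-3}\gamma_C^{2}(\theta,\lambda)$.

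The main obstacle is twofold. First, the core asymptotic $|\widehat{L_\pm}(v)|^2 \approx |K_C|^2/v^2$ need not hold pointwise, since $L$ may move between different scaling regimes near the endpoint; one has to settle for an averaged statement over each dyadic frequency window, using positivity to prevent destructive interference. Second, when one expands $|\widehat{L_-}+\widehat{L_+}|^2$, the cross term $2\mathrm{Re}(\widehat{L_-}(v)\overline{\widehat{L_+}(v)})$ appears; one handles it by observing that $\widehat{L_+}(v)$ carries a phase $e^{-2\pi i v W}$ relative to $\widehat{L_-}(v)$ (coming from spatial separation between the supports), so integration over $v\in[0,1/\lambda]$ — an interval much longer than $1/W$ — produces enough cancellation to make this term lower order than the main contributions. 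Putting everything together and multiplying by the prefactor $\lambda^5$ recovers the claimed $\lambda^{2}\gamma_C^{2}(\theta,\lambda)$.
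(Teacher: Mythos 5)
The paper does not give its own proof of Lemma~\ref{L1}; it cites \cite[Lem.~3.3]{beretti2025fouriertransformplanarconvex} and notes that the argument reduces to one-dimensional results from \cite{MR1166380} and \cite{MR4358540}. Your outline — Fubini along the direction $\duthe$ to pass to the profile (chord-length) function $L$, the dilation identity, the change of variable $v=\delta/\lambda$, the endpoint/middle decomposition $L=L_-+L_0+L_+$, the identifications $L_\pm$ with $|K_C(\theta,\cdot)|$ and $|K_C(\theta+\pi,\cdot)|$, and the use of concavity via $L(s)/s$ non-increasing — is aligned with this route and is correctly set up. Replacing $\gamma_C^2$ by the sum $|K_C(\theta,\lambda)|^2+|K_C(\theta+\pi,\lambda)|^2$ is harmless since the two are comparable, and the heuristic that the $L_0$ piece contributes only $O(\lambda^5)$, which is dominated by $\lambda^2\gamma_C^2\gtrsim\lambda^4$, is correct.

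The gap is precisely the step you yourself flag as ``the main obstacle.'' The dyadic-window estimate
\begin{equation*}
\int_{R}^{2R} v^4\left|\widehat{L_-}(v)\right|^2\de v\approx R^3\, L_-(1/R)^2\qquad(R\gtrsim1)
\end{equation*}
is the entire analytic content of the lemma, and it is exactly what the cited one-dimensional results provide; ``a van-der-Corput-type argument'' does not deliver it. Van der Corput yields only the $\lesssim$ direction. The $\gtrsim$ direction is the nontrivial part and cannot follow from a decay estimate: it requires using the sign structure (non-negativity, monotonicity, and concavity of $L_-$) to show that the Fourier mass near frequency $R$ cannot be too small, typically by pairing against a judiciously chosen test function or by exploiting that the second distributional derivative of $L_-$ is a non-positive measure. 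You gesture at this with ``using positivity to prevent destructive interference,'' but no mechanism is given, and without it the chain $\int_0^{1/\lambda}v^4|\widehat{L_-}|^2\de v\approx\int_\lambda^c u^{-4}|K_C(\theta,u)|^2\de u$ is unsupported in the lower-bound direction. A smaller issue: for the \emph{upper} bound the cross term needs no cancellation at all, since $|\widehat{L}|^2\leq 3(|\widehat{L_-}|^2+|\widehat{L_+}|^2+|\widehat{L_0}|^2)$; it is only the \emph{lower} bound where the phase-separation argument is needed, and there it must be quantified (e.g., by restricting to $v\gtrsim 1/W$ and integrating by parts in $v$ against the slowly varying amplitude), which you do not do. So the architecture is right and matches the approach indicated by the citation, but the proposal does not actually establish the core two-sided one-dimensional estimate that the lemma rests on.
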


We refer to \cite[Lem.~3.3]{beretti2025fouriertransformplanarconvex} for a detailed proof of the latter, but we mention that it relies upon the one-dimensional results in \cite{MR1166380} and \cite{MR4358540}.

We now show, through standard calculations, how quadratic discrepancy and the Fourier transform are intertwined. We start by exploiting the convolutional structure of \eqref{Discrepancy1}. By setting
\begin{equation*}
    {\mu}=\sum_{\mathbf{p}\in\mathcal{P}}\mu_{\rm D}(-\mathbf{p})-N\mu_{\rm H},
\end{equation*}
where $\mu_{\rm H}$ is the normalised Haar measure on $\mathbb{T}^2$ and $\mu_{\rm D}(\mathbf{p})$ is the Dirac delta centred at $\mathbf{p}$, one may write
\begin{equation*}
\mathcal{D}(\mathcal{P},\, \boldsymbol{\tau}+ C)=\int_{\mathbb{T}^2}\mathfrak{P}\{\mathds{1}_C\}(-{\boldsymbol{\tau}}-\mathbf{x})\de {\mu}(\mathbf{x})=\left(\mathfrak{P}\{\mathds{1}_C\}\ast{\mu}\right)(-\boldsymbol{\tau}).
\end{equation*}
Now, if $f$ is an integrable function or a finite measure on $\mathbb{T}^2$, we denote by 
\begin{equation*}
    {\mathcal{F}}\{f\}\colon \mathbb{Z}^2\to\mathbb{C}
\end{equation*}
the function of the Fourier coefficients of $f$. In particular, we note that it holds $\mathcal{F}\{{\mu}\}(\mathbf{0})=0$, and moreover, for every $\mathbf{n}\in\mathbb{Z}^2$, it holds
\begin{equation*}
    {\mathcal{F}}\circ \mathfrak{P}\{\mathds{1}_C\}(\mathbf{n})=\widehat{\mathds{1}}_C(\mathbf{n}).
\end{equation*}
Hence, by applying Parseval's identity, we obtain
\begin{align*}
    \int_{\mathbb{T}^2}\left|\mathcal{D}(\mathcal{P},\, \boldsymbol{\tau}+\delta C)\right|^2\de\boldsymbol{\tau}&=\int_{\mathbb{T}^2}\left|(\mathfrak{P}\{\mathds{1}_{\delta C}\} \ast {\mu})\right|^2(\boldsymbol{\tau})\de\boldsymbol{\tau}\\
    &=\sum_{{\mathbf{n}}\in\mathbb{Z}^2}\left|{\mathcal{F}}\circ \mathfrak{P}\{\mathds{1}_{\delta C}\}({\mathbf{n}})\right|^2\left|{\mathcal{F}}\{{\mu}\}({\mathbf{n}})\right|^2\\
    &=\sum_{{\mathbf{n}}\in\mathbb{Z}^2\setminus\{\mathbf{0}\}}\left|\widehat{\mathds{1}}_{\delta C}({\mathbf{n}})\right|^2\left|\sum_{\mathbf{p}\in\mathcal{P}}e^{2 \pi i \mathbf{p}\cdot{\mathbf{n}}}\right|^2.
\end{align*}
Finally, we may rewrite \eqref{Discrepancy2} as
\begin{equation}\label{FourVer}
    \mathcal{D}_2(\mathcal{P},C)=\sum_{{\mathbf{n}}\in\mathbb{Z}^2\setminus\{\mathbf{0}\}}\left|\sum_{\mathbf{p}\in\mathcal{P}}e^{2 \pi i \mathbf{p}\cdot{\mathbf{n}}}\right|^2\int_0^1\left|\widehat{\mathds{1}}_{\delta C}(\mathbf{n})\right|^2\de \delta.
\end{equation}

\subsection{Geometric construction and Fourier estimates}\label{GeoCon}

Recalling the hypotheses of Theorem~\ref{main2}, we have that $\left\{\alpha_i\right\}_{i\in\mathbb{N}}$ is a sequence of exponents with values in $(2/5,1/2)$,  $\left\{\varepsilon_i\right\}_{i\in\mathbb{N}}$ is a (possibly decreasing) sequence in $(0,\varepsilon]$ for some small positive $\varepsilon$, and $\left\{q_i\right\}_{i\in\mathbb{N}}$ is a (possibly increasing) sequence of positive integers. We remark that $\alpha_i$ prescribes the target order of growth, while $\varepsilon_i$ and $q_i$ quantify, respectively, how closely and for how long we approximate it.

We start our construction. Consider a planar convex body $C$ such that it has a centre of symmetry (this ensures $\gamma_C=|K_C|$) and is symmetric with respect to the $y$-axis. Then, suppose that $\mathbf{0}\in\partial C$ and that its inner normal is the vector $(0,1)$. By symmetry, we may construct $\partial C$ in the first quadrant $\R_+^2$. For this purpose, consider the graphs in $\R_+^2$ of the monomials
\begin{equation*}
    \{P_i\}_{i\in\mathbb{N}}\quad\text{with}\quad P_i(x)=x^{\beta_i}\quad\text{and}\quad\beta_i=\frac{2\alpha_i}{2-3\alpha_i}\in(1,2),
\end{equation*}
and notice that their curvatures
\begin{equation}\label{Curvas}
    \kappa_i(x)=\beta_i(\beta_i-1)\frac{x^{\beta_i-2}}{(1+\beta_i^2x^{2\beta_i-2})^{3/2}}
\end{equation}
are strictly decreasing on $(0,+\infty)$, and that $\kappa_i(x)\to+\infty$ as $x\to0^+$.

Now, let $\left\{k_i\right\}_{i\in\mathbb{N}}$ be a sequence to be chosen later, but such that it is contained in $[10,+\infty)$ and it holds $k_{i+1}>k_i$ with $k_i\to+\infty$. By induction, we construct a $\mathcal{C}^2$ curve segment of finite length as follows. First, consider the auxiliary curve segment
\begin{equation*}
    \left\{(x,y)\in\R_+^2\,\colon\,P_1(x)=y,\,a_1<x<b_1,\,\kappa_1(a_1)=k_1,\,\kappa_1(b_1)=10\right\},
\end{equation*}
and then glue at the endpoint $(a_1,P_1(a_1))$ a tangentially-aligned copy of the auxiliary curve segment
\begin{equation*}
    \left\{(x,y)\in\R_+^2\,\colon\,P_2(x)=y,\,a_2<x<b_2,\,\kappa_2(a_2)=k_2,\,\kappa_2(b_2)=k_1\right\}.
\end{equation*}\begin{figure}\centering
        \includegraphics[width=1\linewidth]{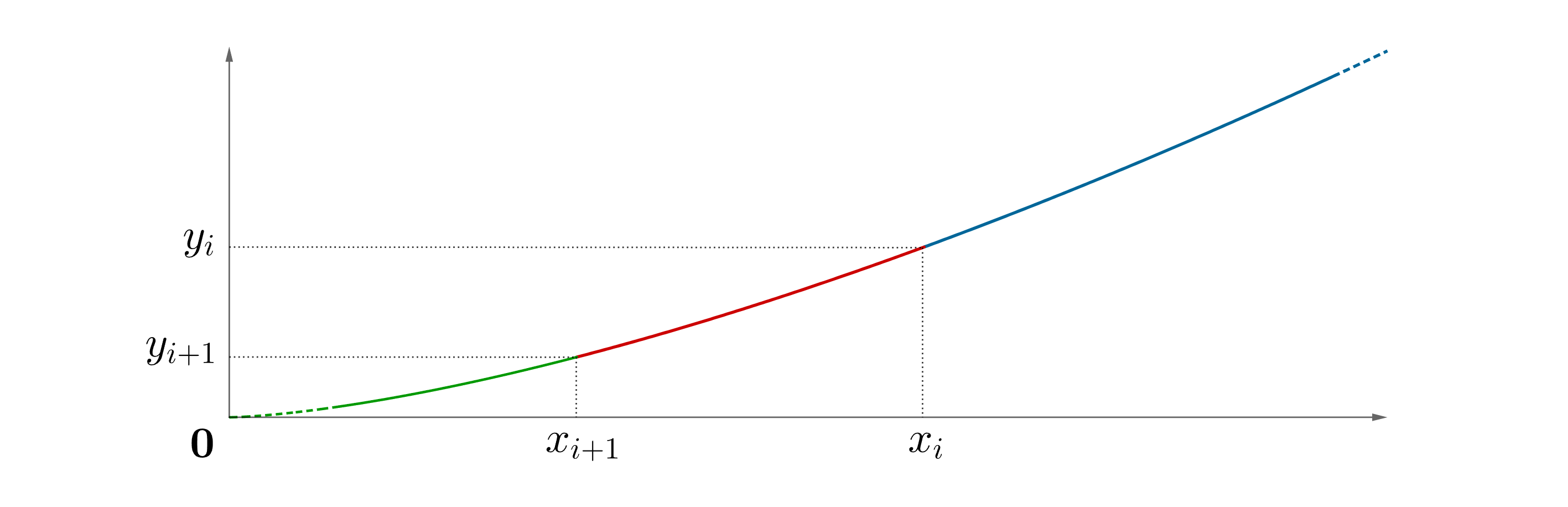}
        \caption{A close-up look at $P$, in which different auxiliary monomial curve segments are displayed in different colours.}
        \label{FF2}
    \end{figure}In particular, notice that we obtain a $\mathcal{C}^2$ link since the signed curvatures at the endpoints $(a_1, P_1(a_1))$ and $(b_2,P_2(b_2))$ coincide. Inductively, and in such a $\mathcal{C}^2$ fashion, we glue all the auxiliary curve segments
\begin{equation*}
    \left\{(x,y)\in\R_+^2\,\colon\,P_i(x)=y,\,a_i<x<b_i,\,\kappa_i(a_i)=k_i,\,\kappa_i(b_i)=k_{i-1}\right\},
\end{equation*}
and in the end, we obtain a $\mathcal{C}^2$ curve segment of finite length. By a final rigid motion, we may place this curve segment so that it is tangent to the $x$-axis and its left endpoint is the origin; see Figure~\ref{FF2} for an illustration of the resulting curve segment $P$. By construction, $P$ has strictly decreasing continuous curvature (and this goes to infinity as one approaches the origin). Finally, we let $\partial C$ coincide with $P$ for its length, and then we design the rest of $\partial C$ in such a way that it is $\mathcal{C}^2$ with signed curvature bounded from above by $\kappa_1(1)$ and from below by, say, $1/10$.

The purpose of the latter construction is the following: $\partial C$ has to look like the graphs of different $P_i$'s at increasingly smaller neighbourhoods of the origin. Ultimately, this is achieved by ensuring that the sequence $\left\{k_i\right\}_{i\in\mathbb{N}}$ increases very rapidly. Indeed, for every $i\in\mathbb{N}$, mark by $(x_i,y_i)$ the left endpoint of $P_i$ after being assembled into $P$, as in Figure~\ref{FF2}. First, the fact that, for $i\in\mathbb{N}$ and $x\in(0,1]$, it holds
\begin{equation*}
    \kappa_i(x)\leq 2x^{-1}\quad\text{and}\quad P'_i(x)\leq 2,
\end{equation*}
ensures that we may get any $(x_i,y_i)$ arbitrarily close to the origin by a large enough choice of $k_i$. Secondly, since the elements in $\left\{k_j\right\}_{j\geq i}$ may be chosen arbitrarily large, we may get the elements in $\left\{P_{j+1}'(b_{j+1})\right\}_{j\geq i}$ arbitrarily close to $0$, and this, in turn, ensures that we may get the inner normal at $(x_i,y_i)$ arbitrarily close to the vector $(0,1)$; in particular, this prevents the curve segment $P$ from becoming a spiral.

In order to apply Lemma~\ref{L1} to $C$, we are interested in how the chords $K_C(\theta+ \pi/2,\lambda)$ behave for small values of $\theta$. For this purpose, let $\beta\in(1,2)$, and consider a second planar convex body $A(\beta)$ whose boundary coincides with the graph of $y=|x|^\beta$ in $[-1,1]^2$. Moreover, suppose that $A(\beta)$ has a centre of symmetry and is symmetric with respect to the $y$-axis. Hence, it holds the following estimates on the chords of $A(\beta)$. 

\begin{prop}\label{Aux}
    Let $A(\beta)$ be as previously defined. Then, there exists a positive small constant $c$ (independent of $\beta$) such that it holds
	\begin{equation*}
	\left|K_{A(\beta)}(\theta+\pi/2,\lambda)\right|\approx_\beta\begin{cases}
		\lambda^{1/\beta} & \textnormal{if}\quad0\leq|\theta|<\lambda^{\frac{\beta-1}{\beta}}< c\\
		\lambda^{1/2}|\theta|^{\frac{2-\beta}{2(\beta-1)}}& \textnormal{if}\quad \lambda^{\frac{\beta-1}{\beta}}\leq |\theta| < c
	\end{cases}.
	\end{equation*}
\end{prop}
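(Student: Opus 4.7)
The plan is to work directly with the explicit equation for $\partial A(\beta)$. By the $y$-axis symmetry of $A(\beta)$, it suffices to treat $\theta\in[0,c)$. The inner normal to $\partial A(\beta)$ on the branch $y=x^\beta$, $x>0$, is parallel to $(-\beta x^{\beta-1},1)$; setting this equal to $(-\sin\theta,\cos\theta)$ identifies the unique support point $b=(x_0,x_0^\beta)$ with $x_0=(\tan\theta/\beta)^{1/(\beta-1)}\approx_\beta\theta^{1/(\beta-1)}$. A boundary point $(x,|x|^\beta)$ then lies on the chord at perpendicular distance $\lambda$ from $b$ exactly when
\begin{equation*}
f(x):=|x|^\beta-x_0^\beta-(x-x_0)\tan\theta=\lambda\sec\theta,
\end{equation*}
so the chord length is the spread between the two real roots of this equation.

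On the right branch $x>0$, substituting $x=x_0+s$ and using $\tan\theta=\beta x_0^{\beta-1}$, a Taylor expansion yields
\begin{equation*}
f(x_0+s)=\tfrac{\beta(\beta-1)}{2}\,x_0^{\beta-2}\,s^2\bigl(1+O_\beta(|s|/x_0)\bigr).
\end{equation*}
In the \emph{tilted regime} $\theta\geq\lambda^{(\beta-1)/\beta}$, one has $\lambda\lesssim_\beta x_0^\beta$, so the two roots $s_\pm=\pm\sqrt{2\lambda\sec\theta/(\beta(\beta-1))}\,x_0^{(2-\beta)/2}\bigl(1+o_\beta(1)\bigr)$ satisfy $|s_\pm|\lesssim_\beta x_0$ and both lie on the right branch. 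The chord length is then $|s_+-s_-|\approx_\beta\lambda^{1/2}\,x_0^{(2-\beta)/2}\approx_\beta\lambda^{1/2}\,\theta^{(2-\beta)/(2(\beta-1))}$.

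In the \emph{flat regime} $\theta<\lambda^{(\beta-1)/\beta}$, one instead has $x_0^\beta\lesssim_\beta\lambda$ and the quadratic approximation breaks down: the chord now crosses $x=0$ and meets the left branch. On that branch the equation becomes
\begin{equation*}
|x|^\beta+\beta x_0^{\beta-1}|x|+(\beta-1)x_0^\beta=\lambda\sec\theta,
\end{equation*}
and since $(\beta-1)x_0^\beta\leq\tfrac12\lambda$ for $c$ small, the $|x|^\beta$ term must dominate, yielding $|x_-|\approx_\beta\lambda^{1/\beta}$. An analogous analysis on the right branch for the large root of $x^\beta-\beta x_0^{\beta-1}x+(\beta-1)x_0^\beta=\lambda\sec\theta$ gives $x_+\approx_\beta\lambda^{1/\beta}$, so the chord length is $\approx_\beta\lambda^{1/\beta}$, matching the tilted estimate at the threshold. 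The main technical obstacle is bookkeeping: choosing $c$ small enough (independently of $\beta\in(1,2)$) that $\sec\theta$ and $\tan\theta/\theta$ are essentially $1$ and the quadratic remainder is genuinely negligible, and verifying, near the threshold $\theta\sim\lambda^{(\beta-1)/\beta}$ where $x_0^\beta$ and $\lambda$ are comparable, that the $|x|^\beta$ term still dominates the linear term $\beta x_0^{\beta-1}|x|$ on the left branch.
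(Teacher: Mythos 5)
Your proposal is correct and follows the same basic reduction as the paper: set up a scalar equation in the abscissa of the chord endpoint, expand around the support point $x_0\approx_\beta\theta^{1/(\beta-1)}$, and invert to get the two regimes. The paper's execution, however, avoids the two bookkeeping issues you flag, and it is worth seeing how. First, instead of treating the two regimes by separate Taylor/dominance arguments, the paper normalizes $z=(x-x_0)/x_0$ so that your equation collapses to $f(z)=|z+1|^\beta-\beta z-1=\lambda/(x_0^\beta\cos\theta)$, and then derives a single \emph{uniform} two-sided bound $f(z)\approx_\beta z^2$ for $0\le z<1$ and $f(z)\approx_\beta z^\beta$ for $z\ge1$ via Taylor's formula with integral remainder; the inversion (Lemma~\ref{r2}) produces both regimes simultaneously, so there is no ``near-threshold'' window where the quadratic remainder must be argued negligible. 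Your statement $s_\pm=\pm\sqrt{\cdots}\,x_0^{(2-\beta)/2}(1+o_\beta(1))$ is in fact not $o_\beta(1)$ near $\lambda\approx x_0^\beta$; a two-sided $\approx_\beta$ bound (which is what the integral-remainder computation delivers) is the correct and sufficient substitute. Second, the paper invokes the remark preceding the proof, $|K^+_{A(\beta)}|\ge|K^-_{A(\beta)}|$ (established by the curvature-monotonicity argument of Lemma~\ref{Tec1}), to reduce the claim to estimating only the right endpoint $x_+$; this removes your need to track whether the left endpoint sits on the right branch, straddles $x=0$, or lies on the left branch, and it makes your tilted-regime conclusion $|K|\approx_\beta\lambda^{1/2}x_0^{(2-\beta)/2}$ follow from the bound on $s_+$ alone, regardless of how precisely $s_-$ is located. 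With those two simplifications your argument and the paper's coincide; without them, your analysis still closes, but needs the extra care you anticipated at the threshold and the dominance checks on the left branch spelled out.
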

We leave the proof of the latter in the Appendix, and we mention that similar calculations had previously been carried out in \cite[Prop.~8.4]{beretti2025fouriertransformplanarconvex}.

Recall that, by a large enough choice of the elements in $\left\{k_j\right\}_{j\geq i}$, we may get $(x_i,y_i)$ arbitrarily close to the origin, as we may get the inner normal at $(x_i,y_i)$ to be arbitrarily close to the vector $(0,1)$. Ultimately, this guarantees that, for every $i\in\mathbb{N}$, we may obtain an interval $[\lambda_{i,1},\lambda_{i,2}]$ and a positive value $\theta_i$ such that, for every $\theta$  such that $|\theta|\leq\theta_i$ and every $\lambda$ such that $\lambda\in\left[\lambda_{i,1},\lambda_{i,2}\right]$, it holds
\begin{equation*}
    \left|K_{C}(\theta+\pi/2,\lambda)\right|\approx \left|K_{A(\beta_i)}(\theta+\pi/2,\lambda)\right|.
\end{equation*}
Notice that, as $\theta_i$ and $\lambda_{i,2}$ depend on $\left\{\beta_j\right\}_{j=1}^{i-1}$ and  $\left\{k_j\right\}_{j=1}^{i-1}$, the choice of $\lambda_{i,1}$ can be made arbitrarily small at every $i$-th step; this will be key for the estimates in the following subsection.

Finally, Proposition~\ref{Aux} and Lemma~\ref{L1} may be applied to $C$ to infer the following result. From now on, we write $\rho^{-1}$ in place of $\lambda$, to prioritise the argument of the Fourier transform.

\begin{lem}\label{L2}
    Let $C$ be as previously defined. Then, one  may choose the sequence $\left\{k_i\right\}_{i\in\mathbb{N}}$ in such a way that, for every $i\in\mathbb{N}$, there exists an interval $[\rho_{i,1},\rho_{i,2}]\subset[1,+\infty)$ (with $\rho_{i+1,1}>\rho_{i,2}$ and $\lim_{i\to+\infty}\rho_{i,1}=+\infty$) and a positive value $\theta_i$ (with $\theta_{i+1}<\theta_i$ and $\lim_{i\to+\infty}\theta_i=0$) such that, for $\rho\in[\rho_{i,1},\rho_{i,2}]$ and $|\theta|\leq\theta_i$, it holds
    \begin{equation*}
	\int_{0}^{1}\left|\widehat{\mathds{1}}_{\delta C}\left(\rho\,\mathbf{u}(\theta+\pi/2)\right)\right|^2\de \delta\approx_{\beta_i}\begin{cases}
		\rho^{-2-2/\beta_i}&\textnormal{if}\quad\left|\theta\right|\leq\rho^{\frac{1-\beta_i}{\beta_i}}\\
		\rho^{-3}\left|\theta\right|^{\frac{2-\beta_i}{\beta_i-1}}&\textnormal{if}\quad\rho^{\frac{1-\beta_i}{\beta_i}}<\left|\theta\right|\leq\theta_i
	\end{cases}.
	\end{equation*}
    In particular, although $\rho_{i,1}$ and $\theta_i$ are determined by the elements in $\left\{k_j\right\}_{j=1}^{i-1}$, the endpoint $\rho_{i,2}$ can be made arbitrarily large upon a larger choice of $k_i$.
\end{lem}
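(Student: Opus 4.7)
The plan is to combine the three tools already set up in this subsection: Lemma~\ref{L1}, the local chord identification with the model body $A(\beta_i)$, and Proposition~\ref{Aux}. Setting $\lambda=\rho^{-1}$ and applying Lemma~\ref{L1} for $\rho\geq\lambda_C^{-1}$ yields
\[
\int_0^1\left|\widehat{\mathds{1}}_{\delta C}\left(\rho\,\mathbf{u}(\theta+\pi/2)\right)\right|^2\de\delta\;\approx\;\rho^{-2}\,\gamma_C^2(\theta+\pi/2,\rho^{-1}).
\]
Central symmetry of $C$ gives $\gamma_C=|K_C|$, and the identification $|K_C(\theta+\pi/2,\lambda)|\approx|K_{A(\beta_i)}(\theta+\pi/2,\lambda)|$ recorded just above Proposition~\ref{Aux} is valid on some band $|\theta|\leq\theta_i$, $\lambda\in[\lambda_{i,1},\lambda_{i,2}]$. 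Setting $\rho_{i,1}=\lambda_{i,2}^{-1}$ and $\rho_{i,2}=\lambda_{i,1}^{-1}$, the two branches of Proposition~\ref{Aux} translate to $|K_{A(\beta_i)}|\approx\rho^{-1/\beta_i}$ when $|\theta|\leq\rho^{(1-\beta_i)/\beta_i}$ and to $|K_{A(\beta_i)}|\approx\rho^{-1/2}|\theta|^{(2-\beta_i)/(2(\beta_i-1))}$ otherwise. Squaring and multiplying by the prefactor $\rho^{-2}$ yields exactly the $\rho^{-2-2/\beta_i}$ and $\rho^{-3}|\theta|^{(2-\beta_i)/(\beta_i-1)}$ of the statement.

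The remaining task is to choose $\{k_i\}$ recursively so that $[\rho_{i,1},\rho_{i,2}]$ and $\theta_i$ have the required properties. Once $\beta_1,\ldots,\beta_i$ and $k_1,\ldots,k_{i-1}$ are fixed, the outer endpoint of the $P_i$-segment and the admissible angular window are determined; I select $\theta_i<\theta_{i-1}/2$ (so $\theta_i\to 0$) and read off $\rho_{i,1}$. Then $k_i$ is chosen so large that the inner endpoint $(a_i,P_i(a_i))$ lies arbitrarily close to the origin and the inner normal there sits within angle $\theta_i/2$ of $(0,1)$; this makes $\lambda_{i,1}$ arbitrarily small, hence $\rho_{i,2}$ arbitrarily large, with enough slack to further enforce $\rho_{i,2}>2\rho_{i,1}$ and $\rho_{i+1,1}>\rho_{i,2}$, which secures $\rho_{i,1}\to\infty$.

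The main obstacle is precisely the chord approximation $|K_C|\approx|K_{A(\beta_i)}|$ on the claimed band. One must verify that $K_C(\theta+\pi/2,\lambda)$ is carried by the $P_i$-segment of $\partial C$ alone and does not bleed onto the neighbouring $P_{i-1}$, $P_{i+1}$ segments or onto the $\mathcal{C}^2$ closure of $\partial C$ (the symmetric reflection contributes only a copy of the same chord, so can be handled together with it). This is controlled by two structural features of the construction: the strict monotonicity of curvature along $P$ separates the monomial regimes by scale of $\lambda$, while the clustering of inner normals along $P_i$ about $(0,1)$, enforced by $k_i$ large, prevents small tilts $|\theta|\leq\theta_i$ from pushing the chord outside the $P_i$-zone. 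Once this reduction is in hand, the rest of the lemma is a bookkeeping exercise on the substitution $\lambda=\rho^{-1}$ inside Proposition~\ref{Aux}.
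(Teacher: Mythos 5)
Your proof is correct and follows essentially the same route as the paper's: the paper itself does not give an explicit proof of Lemma~\ref{L2}, but simply asserts that it follows by combining Lemma~\ref{L1} (to pass from the averaged Fourier transform to $\rho^{-2}\gamma_C^2$), the central symmetry (to replace $\gamma_C$ by $|K_C|$), the local chord identification $|K_C|\approx|K_{A(\beta_i)}|$ on the band $|\theta|\le\theta_i$, $\lambda\in[\lambda_{i,1},\lambda_{i,2}]$ stated just before the lemma, and Proposition~\ref{Aux} under the substitution $\lambda=\rho^{-1}$; your bookkeeping of exponents is accurate. Your discussion of the recursive role of $k_i$ (it pushes $(x_i,y_i)$ toward the origin and its inner normal toward $(0,1)$, hence shrinks $\lambda_{i,1}$ and enlarges $\rho_{i,2}$ freely once $\rho_{i,1}$ and $\theta_i$ are pinned down by the earlier segments) matches the paper's construction and correctly explains the final clause of the lemma.
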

    Starting from the fact that we have designed $P$ so that its curvature is strictly decreasing, we obtain the following auxiliary result.
\begin{lem}\label{Tec1}
    Let $C$ be as previously defined. Then, there exist $\lambda_0>0$ and $\theta_0>\pi/2$ such that, for $\lambda\in[0,\lambda_0)$ and for $\pi/2\leq\theta_1<\theta_2<\theta_0$, it holds
    \begin{equation*}
        |K_C(\theta_1,\lambda)|\leq2|K_C(\theta_2,\lambda)|.
    \end{equation*}
\end{lem}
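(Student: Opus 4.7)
The plan is to exploit the strictly decreasing curvature of $P$ near the origin and derive a precise asymptotic for $|K_C(\theta,\lambda)|$ as $\lambda\to 0^+$.

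First, I would parametrize the arc of $\partial C$ near the origin by arc length $\gamma\colon(-S,S)\to\R^2$ with $\gamma(0)=\mathbf{0}$ and $\gamma'(0)=(1,0)$. Let $\alpha(s)$ be the tangent angle and $\kappa(s)=\alpha'(s)$ the curvature. By the symmetry of $C$ about the $y$-axis and the construction of $P$, one has $\kappa(-s)=\kappa(s)$, and $\kappa$ is strictly decreasing in $|s|$. For $\theta=\pi/2+\phi\in[\pi/2,\theta_0)$, I would define $s^*(\phi)\ge 0$ by $\alpha(s^*)=\phi$, so that the support point is $b(\theta)=\gamma(s^*(\phi))$; then $\phi\mapsto\kappa(s^*(\phi))$ is strictly decreasing.

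Second, I would characterize the chord endpoints as $\gamma(s_\pm(\phi,\lambda))$ with $s_\pm$ determined by $f(s_\pm)=\lambda$, where $f(s)=(\gamma(s)-\gamma(s^*))\cdot\uthe$ satisfies $f(s^*)=f'(s^*)=0$ and $f''(s)=\kappa(s)\cos(\alpha(s)-\phi)$. Applying Taylor's theorem with integral remainder and bounding $f''$ by the monotonicity of $\kappa$, the goal is to establish the asymptotic
\[
    \bigl|K_C(\theta,\lambda)\bigr| = 2\sqrt{2\lambda/\kappa(s^*(\phi))}\,(1+o(1))\qquad\text{as }\lambda\to 0^+,
\]
uniformly for $\theta\in[\pi/2,\theta_0)$. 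Given this, for $\pi/2\le\theta_1<\theta_2<\theta_0$ one reads off
\[
    \frac{|K_C(\theta_1,\lambda)|}{|K_C(\theta_2,\lambda)|} = \sqrt{\frac{\kappa(s^*(\phi_2))}{\kappa(s^*(\phi_1))}}\,(1+o(1)) \le 1+o(1),
\]
which is $\le 2$ for $\lambda<\lambda_0$ chosen small enough.

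The main obstacle will be the regime in which the chord straddles the origin, i.e.\ $s_-<0<s^*<s_+$, because the Taylor expansion around $s^*$ is inadequate there: the integrand $f''(s)=\kappa(s)\cos(\alpha(s)-\phi)$ blows up at $s=0$. In that regime, however, the chord length is essentially controlled by the near-origin behaviour of $P$ itself; more precisely, I expect $|K_C(\theta,\lambda)|$ to be comparable to the symmetric horizontal chord $|K_C(\pi/2,\lambda)|$, which does not depend on $\phi$, so the ratio is once again close to $1$. Identifying the critical transition scale separating the two regimes and patching them by taking $\theta_0$ sufficiently close to $\pi/2$ and $\lambda_0$ correspondingly small should complete the proof with the claimed factor $2$.
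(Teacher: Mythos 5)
Your plan introduces a detour that the paper deliberately avoids, and the detour is where the real difficulty lies. You set up the same ingredients as the paper — arc-length parametrisation, the integral relations $f(s^\ast)=f'(s^\ast)=0$, $f''(s)=\kappa(s)\cos(\alpha(s)-\phi)$, and the strict monotonicity of $\kappa$ along $P$ — but you then try to pass through an explicit pointwise asymptotic $|K_C(\theta,\lambda)|=2\sqrt{2\lambda/\kappa(s^\ast(\phi))}\,(1+o(1))$, claiming it should hold \emph{uniformly} on $[\pi/2,\theta_0)$. It cannot: as $\phi\to0^+$ the support point $s^\ast(\phi)\to0$ and $\kappa(s^\ast)\to+\infty$ by construction, so the formula would force $|K_C(\pi/2,\lambda)|=0$. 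You recognise this (the chord eventually straddles the origin) and propose to patch in a second regime where $|K_C(\theta,\lambda)|\approx|K_C(\pi/2,\lambda)|$, but the patching is precisely the content you would need to prove, and each of the ``$\approx$'' and ``$1+o(1)$'' statements carries an implicit constant that must be tracked carefully to land the explicit factor $2$. As written, the argument is incomplete: neither the transition scale nor the constants are pinned down, and for $C$ the curvature profile changes $\beta_i$ from scale to scale, making a single asymptotic formula even less tractable.

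The paper sidesteps all of this by never forming an asymptotic. It splits the chord into semi\-chords $K^+$ and $K^-$ about the support point $p(\theta)$, expresses $\lambda$ and $|K^+(\theta_i,\lambda)|$ as exact integrals of $\sin$ and $\cos$ of the accumulated curvature (displays \eqref{Inu3}--\eqref{Inu4}), and then uses only that $\kappa(p(\theta_1)+t)\geq\kappa(p(\theta_2)+t)$ together with the monotonicity of $\sin$ and $\cos$ on $[0,\pi/2)$. This yields the clean chain
\begin{equation*}
    |K(\theta_1,\lambda)|=|K^+(\theta_1,\lambda)|+|K^-(\theta_1,\lambda)|\leq 2|K^+(\theta_1,\lambda)|\leq 2|K^+(\theta_2,\lambda)|\leq 2|K(\theta_2,\lambda)|,
\end{equation*}
so the factor $2$ arises structurally from the semi-chord decomposition, not from any asymptotic estimate. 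If you want to salvage your route, the honest plan is to abandon the $(1+o(1))$ asymptotics and instead prove (as the paper does) the exact comparison $|K^+(\theta_2,\lambda)|\geq|K^+(\theta_1,\lambda)|\geq|K^-(\theta_1,\lambda)|$ directly from the integral identities and the curvature monotonicity; that closes the gap without any regime matching.
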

The proof of the latter requires a technical geometric argument, which we leave to the Appendix. By pairing Lemma~\ref{L1} with Lemma~\ref{Tec1}, we obtain that, for every $\rho>\max\{\lambda_C^{-1},\lambda_0^{-1}\}$ and every angle $\theta_1$ and $\theta_2$ such that $\pi/2\leq\theta_1<\theta_2<\theta_0$, it holds
    \begin{equation}\label{EqX}
        \int_{0}^{1}\left|\widehat{\mathds{1}}_{\delta C}\left(\rho\,\mathbf{u}(\theta_1)\right)\right|^2\de \delta\lesssim\int_{0}^{1}\left|\widehat{\mathds{1}}_{\delta C}\left(\rho\,\mathbf{u}(\theta_2)\right)\right|^2\de \delta.
    \end{equation}

\subsection{Lower bound}
We show how we may construct the sequence $\left\{k_i\right\}_{i\in\mathbb{N}}$ in such a way that, for every $i\in\mathbb{N}$,  it holds the lower bound
\begin{equation*}
    N^{\alpha_i-\varepsilon_i}\leq\inf_{\# \mathcal{P}=N}\mathcal{D}_2(\mathcal{P},\, C)\quad\text{for}\quad N\in[N_i,N_i+q_i].
\end{equation*}
For this purpose, we need an argument of Cassels~\cite{MR0087709} and Montgomery~\cite[Ch.~6]{MR1297543} for estimating exponential sums from below. We formulate it in a version specialised to balls, and the reader may find a short proof either in \cite{MR4358540} or \cite{beretti2025fouriertransformplanarconvex}. In what follows, $B(r)$ stands for the ball centred at the origin with radius $r$.

\begin{lem}[Cassels-Montgomery]\label{C-M}
	For every origin-symmetric convex body $C\subset\mathbb{R}^2$ and for every finite configuration of points $\{\mathbf{p}_j\}_{j=1}^N$ in $\mathbb{T}^2$, it holds
	\begin{equation*}
	\sum_{\mathbf{m}\in(C\setminus B(r))\cap\mathbb{Z}^2}\left|\sum_{j=1}^{N}e^{2\pi i \mathbf{m}\cdot \mathbf{p}_j}\right|^2\geq\frac{|C|}{4}N-2\pi (r^2+1)N^2.
	\end{equation*}
\end{lem}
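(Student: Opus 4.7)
The plan is to construct an explicit nonnegative kernel $\Phi$ on $\R^2$ whose Fourier transform $\hat\Phi$ is nonnegative and compactly supported in $C$, and then to compare the weighted exponential-sum expression $\sum_{\mathbf{m}}\hat\Phi(\mathbf{m})|S(\mathbf{m})|^2$ (where $S(\mathbf{m})=\sum_{j=1}^N e^{2\pi i\mathbf{m}\cdot\mathbf{p}_j}$) with the target sum, by means of Poisson summation.

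Concretely, I would set
$$\Phi(\mathbf{x}) = |C/2|^{-1}\,\bigl|\widehat{\mathds{1}}_{C/2}(\mathbf{x})\bigr|^2.$$
The origin-symmetry of $C$ forces $\widehat{\mathds{1}}_{C/2}$ to be real-valued, so $\Phi\geq 0$, and a direct computation gives $\Phi(\mathbf{0}) = |C/2|^{-1}|C/2|^2 = |C|/4$. Its Fourier transform is the normalised auto-convolution
$$\hat\Phi(\boldsymbol\xi) = |C/2|^{-1}(\mathds{1}_{C/2}\ast\mathds{1}_{C/2})(\boldsymbol\xi) = |C/2|^{-1}\bigl|(C/2)\cap(\boldsymbol\xi+C/2)\bigr|,$$
which is nonnegative, supported in $C/2+C/2 = C$ (by convexity and symmetry), and bounded pointwise by $1$.

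Expanding $|S(\mathbf{m})|^2 = \sum_{j,k} e^{2\pi i\mathbf{m}\cdot(\mathbf{p}_j-\mathbf{p}_k)}$ and applying Poisson summation to each inner sum yields
$$\sum_{\mathbf{m}\in\mathbb{Z}^2}\hat\Phi(\mathbf{m})|S(\mathbf{m})|^2 = \sum_{j,k=1}^N\sum_{\mathbf{n}\in\mathbb{Z}^2}\Phi(\mathbf{p}_j-\mathbf{p}_k+\mathbf{n})\geq N\Phi(\mathbf{0}) = \frac{|C|}{4}N,$$
where the lower bound comes from retaining only the diagonal terms $j=k$ with $\mathbf{n}=\mathbf{0}$ and discarding the rest, using $\Phi\geq 0$.

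Finally, the support of $\hat\Phi$ truncates the left-hand sum to $\mathbf{m}\in C\cap\mathbb{Z}^2$. I split this into contributions from $B(r)\cap\mathbb{Z}^2$ and from $(C\setminus B(r))\cap\mathbb{Z}^2$. For the first part, the uniform bounds $\hat\Phi\leq 1$ and $|S(\mathbf{m})|^2\leq N^2$, combined with the elementary lattice-point count $\#(B(r)\cap\mathbb{Z}^2)\leq \pi(r+\sqrt{2}/2)^2\leq 2\pi(r^2+1)$ (obtained by inflating unit Voronoi cells into $B(r+\sqrt{2}/2)$), give a total contribution of at most $2\pi(r^2+1)N^2$. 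For the second part, a second use of $\hat\Phi\leq 1$ lets me pass from $\hat\Phi(\mathbf{m})|S(\mathbf{m})|^2$ to $|S(\mathbf{m})|^2$, which delivers the claim after rearrangement. No step is a real obstacle: the Poisson summation is legitimate because $\hat\Phi$ is compactly supported (so both sides of the identity reduce to genuinely finite sums when paired against the $N^2$ phases), the support property of $\hat\Phi$ is immediate from convexity and origin-symmetry, and the lattice bound is the standard Voronoi-cell estimate.
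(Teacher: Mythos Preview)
The paper does not give its own proof of this lemma; it merely states the result and refers the reader to \cite{MR4358540} and \cite{beretti2025fouriertransformplanarconvex} for a short proof. Your argument is exactly the classical Cassels--Montgomery construction (Fej\'er-type kernel $\Phi=|C/2|^{-1}|\widehat{\mathds 1}_{C/2}|^2$, Poisson summation, then splitting off the small ball), and it is correct; this is precisely the approach one finds in those references.

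One small remark on rigor: your justification of Poisson summation (``both sides reduce to genuinely finite sums'') is slightly inaccurate, since only the $\mathbf m$-sum is finite while $\sum_{\mathbf n}\Phi(\mathbf y+\mathbf n)$ is genuinely infinite. This is easily repaired: $\Phi\in L^1(\R^2)$ by Plancherel, so its periodization lies in $L^1(\mathbb T^2)$ and agrees a.e.\ with the trigonometric polynomial $T(\mathbf y)=\sum_{\mathbf m}\hat\Phi(\mathbf m)e^{2\pi i\mathbf m\cdot\mathbf y}$; since $\Phi\ge 0$ is continuous, the periodization is lower semicontinuous, whence $T(\mathbf y)\ge\sum_{\mathbf n}\Phi(\mathbf y+\mathbf n)\ge 0$ everywhere and in particular $T(\mathbf 0)\ge\Phi(\mathbf 0)=|C|/4$. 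This is all your argument actually needs.
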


We proceed with the geometric argument that will pair with the latter lemma. For the sake of notation, we write
\begin{equation*}
    \|\theta\|_\pi=\min_{j\in\mathbb{Z}}|\theta-j\pi|.
\end{equation*}
First, notice that, since we have constructed $\partial C$ in such a way that its curvature is uniformly bounded from below and above when excluding a neighbourhood of the origin (and its symmetric counterpart), then for every $\lambda\in[0,\lambda_0)$ and for every $\theta$ such that $\|\theta\|_\pi>1/10$, it holds
\begin{equation}\label{EqZ}
    |K_C(\theta+\pi/2,\lambda)|\approx\lambda^{1/2}.
\end{equation}
Now, fix an $i\in\mathbb{N}$. Then, by the latter consideration paired with Lemma~\ref{L1}, and by Lemma~\ref{L2} and \eqref{EqX}, we get the lower bound
\begin{equation*}
	\int_{0}^{1}\left|\widehat{\mathds{1}}_{\delta C}\left(\rho\,\mathbf{u}(\theta+\pi/2)\right)\right|^2\de \delta\gtrsim_{\beta_i} \begin{cases}\rho^{-2-2/\beta_i}&\textnormal{if}\quad\rho\in\left[\rho_{i,1},\rho_{i,2}\right]\;\land\;\|\theta\|_\pi\leq1/10\\\rho^{-3}&\textnormal{if}\quad\rho\in\left[\rho_{i,1},\rho_{i,2}\right]\;\land\;\|\theta\|_\pi>1/10\\
		
		0&\textnormal{else}
	\end{cases}.
	\end{equation*}
    
    Let $\tilde\varepsilon_i\in(0,1]$ be a value to be chosen later. Moreover, let $N$, $X$, and $Y$ be positive parameters to be chosen later, but such that $\rho_{i,1}\ll Y\ll X\ll\rho_{i,2}$ and $XY= N^{1+\tilde\varepsilon_i}$. Then, consider a rectangle $R\subset\mathbb{R}^2$ such that it is symmetric with respect to the axes and has a vertex in $(X/2, Y/2)$; consequently, it also holds $|R|=N^{1+\tilde\varepsilon_i}$. Denote by $R(\omega)$ the same rectangle rotated counterclockwise by an angle $\omega$, and define the function $\Phi\colon\mathbb{Z}^2\to[0,+\infty)$ by
	\begin{equation*}
	\Phi(\mathbf{m})=\int_{-1/10}^{1/10}\mathds{1}_{R(\omega)\setminus B(\rho_{i,1})}(\mathbf{m})\de \omega,
	\end{equation*}
	Now, in order to apply the Cassels-Montgomery lemma effectively, we need to find a positive parameter $Z=Z(N)$ such that in the annulus $|\mathbf{m}|\in\left[\rho_{i,1},\rho_{i,2}\right]$ it holds
	\begin{equation*}
	Z\Phi(\mathbf{m})\leq \begin{cases}|\mathbf{m}|^{-2-2/\beta_i}&\textnormal{if}\quad|\mathbf{m}|\in\left[\rho_{i,1},\rho_{i,2}\right]\;\land\;\|\arg(\mathbf{m})+\pi/2\|_\pi\leq1/10\\|\mathbf{m}|^{-3}&\textnormal{if}\quad|\mathbf{m}|\in\left[\rho_{i,1},\rho_{i,2}\right]\;\land\;\|\arg(\mathbf{m})+\pi/2\|_\pi>1/10\\
		
		0&\textnormal{else}
	\end{cases}.
	\end{equation*}
     
    First, we describe two regions where $\Phi$ vanishes. By construction, for all $\mathbf{m}\in \mathbb{Z}^2$  such that $|\mathbf{m}|\in\left[\rho_{i,1},X\right]^\mathsf{c}$, it holds $\Phi(\mathbf{m})=0$. Secondly, for all $\mathbf{m}\in \mathbb{Z}^2$ such that $|\mathbf{m}|\geq Y$ and $\|\arg(\mathbf{m})+\pi/2\|_\pi\leq1/10$, it holds $\Phi(\mathbf{m})=0$ as well.

    Now, for all $\mathbf{m}\in \mathbb{Z}^2$ such that $|\mathbf{m}|\leq Y$, we have the trivial estimate $\Phi(\mathbf{m})\leq1/5$. Since in this region we are aiming for $Z\Phi(\mathbf{m})\leq|\mathbf{m}|^{-2-2/\beta_i}$, then we settle for
     \begin{equation*}
         Z\lesssim Y^{-2-2/\beta_i}.
     \end{equation*}
    Lastly, we consider the remaining region defined by $|\mathbf{m}|\in[Y,X]$ and $\|\arg(\mathbf{m})+\pi/2\|_\pi>1/10$. By some elementary geometry, we find that $\Phi(\mathbf{m})\leq2\psi$ where $\psi$ is such that $|\mathbf{m}|\sin\psi=Y/2$. Hence, it holds $\Phi(\mathbf{m})\leq\pi Y/(2|\mathbf{m}|)$, and since in this region we are aiming for $Z\Phi(\mathbf{m})\leq|\mathbf{m}|^{-3}$, then we settle for
     \begin{equation*}
         Z\lesssim Y^{-1}X^{-2}.
     \end{equation*}
     Having considered all the possible regions, we are left with the requirement
     \begin{equation*}
         Z\lesssim\min\left(Y^{-2-2/\beta_i}, Y^{-1}X^{-2} \right),
     \end{equation*}
    and if, by an optimisation argument, we equalise the two terms in the minimum and use the constraint $XY=N^{1+\tilde\varepsilon_i}$, then we obtain
	\begin{equation*}
	X\approx N^{(1+\tilde\varepsilon_i)(\beta_i+2)/(3\beta_i+2)},\quad Y\approx  N^{(1+\tilde\varepsilon_i)2\beta_i/(3\beta_i+2)},\quad\text{and}\quad Z\approx  N^{-(1+\tilde\varepsilon_i)(4\beta_i+4)/(3\beta_i+2)}.
	\end{equation*}
    
    Starting from \eqref{FourVer}, and by Parseval's identity and the Cassels-Montgomery lemma, we get that, for any configuration of $N$ points $\mathcal{P}=\{\mathbf{p}_j\}_{j=1}^N$ in $\mathbb{T}^2$, it holds
	\begin{align*}
		\int_0^1\int_{\mathbb{T}^2}\left|\mathcal{D}(\mathcal{P},\, \boldsymbol{\tau}+\delta C)\right|^2\de \boldsymbol{\tau}\de\delta&=\sum_{\mathbf{m}\in\mathbb{Z}^2\setminus\{\mathbf{0}\}}\left|\sum_{j=1}^{N}e^{2\pi i \mathbf{m}\cdot \mathbf{p}_j}\right|^2\int_{0}^{1}\left|\widehat{\mathds{1}}_{\delta C}\left(\mathbf{m}\right)\right|^2\de \delta\\
		&\gtrsim_{\beta_i} \sum_{|\mathbf{m}|\geq\rho_{i,1}} \left|\sum_{j=1}^{N}e^{2\pi i \mathbf{m}\cdot \mathbf{p}_j}\right|^2Z\Phi(\mathbf{m})\\
		&= Z\int_{-1/10}^{1/10}\sum_{\mathbf{m}\in R(\omega)\setminus B(\rho_{i,1})}\left|\sum_{j=1}^{N}e^{2\pi i \mathbf{m}\cdot \mathbf{p}_j}\right|^2\de \omega\\
		&\gtrsim Z \left( N^{2+\tilde\varepsilon_i}-2\pi (\rho_{i,1}^2+1)N^2\right).
	\end{align*}
	By construction, $\rho_{i,2}$ can be chosen arbitrarily large, and this also applies to $N$ since our only constraint is $X\ll\rho_{i,2}$. Hence, by taking $\tilde\varepsilon_i$ in such a way that $\tilde\varepsilon_i(\beta_i+2)/(3\beta_i+2)<\varepsilon_i$, we find an $N_i$ such that 
    \begin{equation*}
    N_i^{\tilde\varepsilon_i}\geq4\pi(\rho^2_{i,1}+1)\quad\text{and}\quad (N_i+q_i)^{(1+\tilde\varepsilon_i)(\beta_i+2)/(3\beta_i+2)}\ll \rho_{i,2}.
    \end{equation*}
    Therefore, it follows that, for every $N\in[N_i,N_i+q_i]$, it holds
	\begin{equation*}
	\int_0^1\int_{\mathbb{T}^2}\left|\mathcal{D}(\mathcal{P},\, \boldsymbol{\tau}+\delta C)\right|^2\de \boldsymbol{\tau}\de\delta\gtrsim_{\beta_i} N^{-(1+\tilde\varepsilon_i)(4\beta_i+4)/(3\beta_i+2)}N^{2+\tilde\varepsilon_i}=N^{\frac{2\beta_i}{3\beta_i+2}-\tilde\varepsilon_i\frac{\beta_i+2}{3\beta_i+2}}>N^{\alpha_i-\varepsilon_i}.
	\end{equation*}
    Notice that, since $N^{-\varepsilon_i}$ can be made arbitrarily small, we can ultimately get the implicit constant in front of $N^{\alpha_i-\varepsilon_i}$ to be $1$.

\subsection{Upper bound}

For every $i\in\mathbb{N}$, we proceed to show the upper bound
\begin{equation*}
    \inf_{\# \mathcal{P}=N}\mathcal{D}_2(\mathcal{P},\, C)\lesssim N^{\alpha_i+\varepsilon_i}\quad\text{for}\quad N\in[N_i,N_i+q_i].
\end{equation*}
This is done by employing suitable sampling sequences, and we will make use of the kind that appear in \cite[Thm.~8]{MR4358540}.

Fix $i\in\mathbb{N}$, and again, we recall that at every $i$-th step of our construction, $\rho_{i,2}$ serves as a free parameter that may be chosen arbitrarily large. By pairing Lemma~\ref{L1} with \eqref{EqZ}, and by Lemma~\ref{L2} and \eqref{EqX}, in the annulus $\rho\in[\rho_{i,1},\rho_{i,2}]$ it holds the upper bound
\begin{equation}\label{Est2}
	\int_{0}^{1}\left|\widehat{\mathds{1}}_{\delta C}\left(\rho\,\mathbf{u}(\theta+\pi/2)\right)\right|^2\de \delta\lesssim_{\beta_i}\begin{cases}
		\rho^{-2-2/\beta_i}&\textnormal{if}\quad\|\theta\|_\pi\leq\rho^{\frac{1-\beta_i}{\beta_i}}\\
		\rho^{-3}\|\theta\|_\pi^{\frac{2-\beta_i}{\beta_i-1}}&\textnormal{if}\quad\rho^{\frac{1-\beta_i}{\beta_i}}<\|\theta\|_\pi\leq\theta_i\\
        \rho^{-3}&\textnormal{if}\quad\theta_i<\|\theta\|_\pi
        
	\end{cases}.
	\end{equation}
    By Lemma~\ref{L1}, and since we have constructed $\partial C$ in such a way that its curvature is uniformly bounded from below, then, for every $\rho>\lambda_C^{-1}$ and any angle $\theta$, it holds
    \begin{equation}\label{Est3}
        \int_{0}^{1}\left|\widehat{\mathds{1}}_{\delta C}\left(\rho\,\mathbf{u}(\theta)\right)\right|^2\de \delta\lesssim \rho^{-3}.
    \end{equation}
    Again, let $\tilde N\in\mathbb{N}$ be a free large parameter to be chosen later, and set
    \begin{equation*}
        G=\lfloor \tilde N^{\frac{2+\beta_i}{2+3\beta_i}}\rfloor,\quad L=\lfloor \tilde N^{\frac{2\beta_i}{2+3\beta_i}}\rfloor,\quad J_G=[0,G-1]\cap\mathbb{N},\quad\text{and}\quad J_L=[0,L-1]\cap\mathbb{N}. 
    \end{equation*}
    Set $N=GL$ (trivially, $L<G$ and $N/2<\tilde N<2N$), and consider the configuration of $N$ points $\mathcal{P}$ in $\mathbb{T}^2$ defined by
	\begin{equation*}
	\mathcal{P}=\left\{\mathbf{p}_j\right\}_{j=1}^N=\{\mathbf{p}_{\ell,g}\}_{g\in J_G,\,\ell\in J_L}\quad\text{with}\quad \mathbf{p}_{\ell,g}=\left(\frac{g}{G},\,\frac{\ell}{L}\right).
	\end{equation*}
	Recall that, by Parseval's identity, we get
	\begin{equation*}
	\int_{\mathbb{T}^2}\left|\mathcal{D}(\mathcal{P}, \boldsymbol{\tau}+\delta C)\right|^2\de \boldsymbol{\tau}=\sum_{\mathbf{m}\neq(0,0)}\left|\widehat{\mathds{1}}_{\delta C}(\mathbf{m})\right|^2\left|\sum_{g\in J_G}\sum_{\ell\in J_L}e^{2\pi i \mathbf{m}\cdot \mathbf{p}_{\ell,g}}\right|^2,
	\end{equation*}
	and in particular, it holds the identity
	\begin{equation*}
	\sum_{g\in J_G}\sum_{\ell\in J_L}e^{2\pi i\left(m_1\frac{g}{G}+m_2\frac{\ell}{L}\right)}=
	\begin{cases}
	GL &\text{if}\quad m_1\in G\mathbb{Z}\quad\text{and}\quad m_2\in L\mathbb{Z}\\
		0 &\text{else}
	\end{cases}.
	\end{equation*}
	Hence, consider the set $\mathcal{R}=(G\mathbb{Z}\times L\mathbb{Z})\setminus\{\mathbf{0}\}$, and split it into the regions
	\begin{equation*}
	\begin{split}
	V_1&=\left\{\mathbf{m}\in\mathcal{R}\,\colon\,|m_1|\leq|m_2|^{1/\beta_i}\right\},\\
	V_2&=\left\{\mathbf{m}\in\mathcal{R}\,\colon\,|m_2|^{1/\beta_i}<|m_1|\leq\theta_i|m_2|\right\},\\
	V_3&=\left\{\mathbf{m}\in\mathcal{R}\,\colon\,\theta_i|m_2|<|m_1|\right\},
	\end{split}
	\end{equation*}
	so that we may write
	\begin{equation}\label{e7}
		\mathcal{D}_2(\mathcal{P},\, C)=G^2L^2\left(\sum_{\mathbf{m}\in V_1}+\sum_{\mathbf{m}\in V_2}+\sum_{\mathbf{m}\in V_3}\right)\int_{0}^{1}\left|\widehat{\mathds{1}}_{\delta C}(\mathbf{m})\right|^2\de\delta.
	\end{equation}
        Now, we constrain $\tilde N$ to be so large that $L>\rho_{i,1}$, and as a consequence, it follows that $\mathcal{R}\cap B(\rho_{i,1})=\varnothing$. Moreover, we constrain $\rho_{i,2}$ to be so large that
        \begin{equation*}
            \mathcal{Q}_i=\sum_{\mathbf{m}\in\mathcal{R}\cap B^\mathsf{c}(\rho_{i,2})}|\mathbf{m}|^{-3}< N^{-2}.
        \end{equation*}
        We exploit \eqref{Est2} to study the three sums in \eqref{e7}. In this case, we must consider
        \begin{equation*}
            \rho=|\mathbf{m}|\quad\text{and}\quad\tan\theta=-\frac{m_1}{m_2}.
        \end{equation*}
        For the sake of notation, we write the annulus $\mathcal{A}=\left\{\mathbf{m}\in\mathcal{R}\,\colon\,|\mathbf{m}|\in[\rho_{i,1},\rho_{i,2}]\right\}$. Notice that for small $\theta$ it holds $\tan\theta\approx\theta$, and consequently, with a bit of rearrangement, we can rewrite the estimates in \eqref{Est2} as: for every $\mathbf{m}\in \mathcal{A}$ it holds
        \begin{equation}\label{e5-2}
	\int_{0}^{1}\left|\widehat{\mathds{1}}_{\delta C}\left(\mathbf{m}\right)\right|^2\de \delta\lesssim_{\beta_i}\begin{cases}
		|m_2|^{-2-2/\beta_i}&\text{if}\quad|m_1|\leq|m_2|^{1/\beta_i}\\
		|m_1|^{\frac{2-\beta_i}{\beta_i-1}}|m_2|^{\frac{1-2\beta_i}{\beta_i-1}}&\text{if}\quad|m_2|^{1/\beta_i}\leq|m_1|<\theta_i|m_2|\\
        |m_1|^{-3}
        &\text{if}\quad\theta_i|m_2|<|m_1|
	\end{cases}.
	\end{equation}
    For $j\in\{1,2,3\}$, notice that by the constraints we have imposed earlier, it holds $V_j\cap\mathcal{A}=V_j\cap B(\rho_{i,2})$ and $G^2L^2\mathcal{Q}_i=N^2\mathcal{Q}_i<1$, and by further applying \eqref{Est3}, we obtain
    \begin{equation}\label{Est4}
    \begin{split}
        G^2L^2\sum_{\mathbf{m}\in V_j}\int_{0}^{1}\left|\widehat{\mathds{1}}_{\delta C}(\mathbf{m})\right|^2\de\delta&=G^2L^2\left(\sum_{\mathbf{m}\in V_j\cap\mathcal{A}}+\sum_{\mathbf{m}\in V_j\cap B^{\mathsf{c}}(\rho_{i,2})}\right)\int_{0}^{1}\left|\widehat{\mathds{1}}_{\delta C}(\mathbf{m})\right|^2\de\delta\\&\lesssim G^2L^2\sum_{\mathbf{m}\in V_j\cap\mathcal{A}}\int_{0}^{1}\left|\widehat{\mathds{1}}_{\delta C}(\mathbf{m})\right|^2\de\delta+G^2L^2\mathcal{Q}_i\\ &\lesssim G^2L^2\sum_{\mathbf{m}\in V_j\cap\mathcal{A}}\int_{0}^{1}\left|\widehat{\mathds{1}}_{\delta C}(\mathbf{m})\right|^2\de\delta.
        \end{split}
    \end{equation}
    Then, by applying \eqref{e5-2} and \eqref{Est4}, for the first sum in the last term of \eqref{e7} we obtain
	\begin{align*}
		\sum_{\mathbf{m}\in V_1} \int_{0}^{1}\left|\widehat{\mathds{1}}_{ \delta C}(\mathbf{m})\right|^2\de\delta&\lesssim_{\beta_i} \sum_{\mathbf{m}\in V_1}|m_2|^{-2-2/\beta_i}\lesssim \sum_{n_2=1}^{+\infty}\,\sum_{n_1=0}^{(Ln_2)^{1/\beta_i}/G}(Ln_2)^{-2-2/\beta_i}\\
        &\lesssim  L^{-2-2/\beta_i}\sum_{n_2=1}^{+\infty}n_2^{-2-2/\beta_i}\left(1+(Ln_2)^{1/\beta_i}/G\right)\\ &\lesssim L^{-2-2/\beta_i}.
	\end{align*}
        Similarly, for the second sum in the last term of \eqref{e7}, we get
    \begin{align*}
		\sum_{\mathbf{m}\in V_2} \int_{0}^{1}\left|\widehat{\mathds{1}}_{ \delta C}(\mathbf{m})\right|^2\de\delta&\lesssim_{\beta_i} \sum_{\mathbf{m}\in V_2}|m_1|^{\frac{2-\beta_i}{\beta_i-1}}|m_2|^{\frac{1-2\beta_i}{\beta_i-1}}\\
		&\lesssim \sum_{n_1=1}^{+\infty}\,\sum_{n_2=Gn_1/(\theta_iL)}^{(Gn_1)^{\beta_i}/L}(Gn_1)^{\frac{2-\beta_i}{\beta_i-1}}(Ln_2)^{\frac{1-2\beta_i}{\beta_i-1}}\\
		&\lesssim G^{\frac{2-\beta_i}{\beta_i-1}} L^{\frac{1-2\beta_i}{\beta_i-1}}\sum_{n_1=1}^{+\infty}n_1^{\frac{2-\beta_i}{\beta_i-1}}\sum_{n_2=Gn_1/(\theta_iL)}^{+\infty}n_2^{\frac{1-2\beta_i}{\beta_i-1}}\\
		&\lesssim G^{\frac{2-\beta_i}{\beta_i-1}} L^{\frac{1-2\beta_i}{\beta_i-1}}\sum_{n_1=1}^{+\infty}n_1^{\frac{2-\beta_i}{\beta_i-1}}\left(Gn_1/(\theta_iL)\right)^{\frac{-\beta_i}{\beta_i-1}}\\
        &\lesssim G^{-2}L^{-1}\sum_{n_1=1}^{+\infty}n_1^{-2}\lesssim G^{-2}L^{-1}.
	\end{align*}
	For the last sum in the last term in \eqref{e7}, we obtain
	\begin{align*}
		\sum_{\mathbf{m}\in V_3} \int_{0}^{1}\left|\widehat{\mathds{1}}_{ \delta C}(\mathbf{m})\right|^2\de\delta&\lesssim_{\beta_i} \sum_{\mathbf{m}\in V_3}|m_1|^{-3}\lesssim \sum_{n_1=1}^{+\infty}\,\sum_{n_2=0}^{Gn_1/(\theta_i L)}(Gn_1)^{-3}\\
        &\lesssim G^{-3}\sum_{n_1=1}^{+\infty}n_1^{-2}G/(\theta_i L)\lesssim G^{-2}L^{-1}\theta_i^{-1}.
	\end{align*}
    Finally, since $G^2L^{-2/\beta_i}=L\approx N^{\frac{2\beta_i}{2+3\beta_i}}=N^{\alpha_i}$, and by comparing the previous three estimates with \eqref{e7},  we obtain
    \begin{equation*}
        \mathcal{D}_2(\mathcal{P},\, C)\lesssim_{\beta_i} G^2L^2\left(L^{-2-2/\beta_i}+G^{-2}L^{-1}/\theta_i\right)\lesssim G^2L^{-2/\beta_i}+L/\theta_i\lesssim N^{\alpha_i}\theta_i^{-1}
    \end{equation*}
    Hence, since we may choose $\rho_{i,2}$ and $\tilde N$ arbitrarily large, then we can ultimately obtain $N$ such that $N^{\varepsilon_i}\geq\theta_i^{-1}$, and for the same reason, we may ultimately write
    \begin{equation*}
        \mathcal{D}_2(\mathcal{P},\, C)\leq N^{\alpha_i+\varepsilon_i}.
    \end{equation*}
    
    At this stage, by the constraints we have imposed, we may only conclude that the upper bound holds for all the integers $N$ of the form
    \begin{equation*}
        N=\lfloor n^{\frac{2+\beta_i}{2+3\beta_i}}\rfloor\,\lfloor n^{\frac{2\beta_i}{2+3\beta_i}}\rfloor,\quad\text{with}\quad n^{\alpha_i}>\rho_{i,1}\quad\text{and}\quad n^2<\rho_{i,2}.
    \end{equation*}
    In order to prove that there is a suitable choice of points for some $q_i$ consecutive integers in the interval $(\rho_{i,1}^{1/\alpha_i},\rho_{i,2}^{1/2})$, consider the following recursive definition
	\begin{equation*}
	n_j=\max\left\{n\in\mathbb{N}\;\colon\lfloor n^{1-\alpha_i}\rfloor\,\lfloor n^{\alpha_i}\rfloor \leq N-\sum_{k=1}^{j-1}\,\lfloor n_k^{1-\alpha_i}\rfloor\,\lfloor n_k^{\alpha_i}\rfloor\right\}\quad\text{for}\quad j\in\mathbb{N}\setminus\{0\},
	\end{equation*}
	where improper sums are conventionally considered as zeros.
	Now, notice that the latter definition implies that
	\begin{equation*}
 \begin{split}
	N-\sum_{k=1}^{j-1}\,\lfloor n_k^{1-\alpha_i}\rfloor\,\lfloor n_k^{\alpha_i}\rfloor&\leq\lfloor (n_j+1)^{1-\alpha_i}\rfloor\,\lfloor (n_j+1)^{\alpha_i}\rfloor\\
    &\leq\lfloor n_j^{1-\alpha_i}\rfloor\,\lfloor n_j^{\alpha_i}\rfloor+\lfloor n_j^{1-\alpha_i}\rfloor+\lfloor n_j^{\alpha_i}\rfloor+1,
  \end{split}
	\end{equation*}
	and therefore, it follows that
	\begin{equation*}
	N-\sum_{k=1}^{j}\,\lfloor n_k^{1-\alpha_i}\rfloor\,\lfloor n_k^{\alpha_i}\rfloor\leq2n_j^{1-\alpha_i}.
	\end{equation*}
	Again, by the definition of $n_j$, we point out that
	\begin{equation*}
	\frac{n_1}{2}\leq N\quad\text{and}\quad
	\frac{n_{j+1}}{2}\leq N-\sum_{k=1}^{j}\,\lfloor n_k^{1-\alpha_i}\rfloor\,\lfloor n_k^{\alpha_i}\rfloor,
	\end{equation*}
	and therefore, by induction, we obtain
	\begin{equation*}
	N-\sum_{k=1}^{j}\,\lfloor n_k^{1-\alpha_i}\rfloor\,\lfloor n_k^{\alpha_i}\rfloor\leq2n_j^{1-\alpha_i}\leq2^{4}n_{j-1}^{(1-\alpha_i)^2}\leq2^{2j}N^{\left(1-\alpha_i\right)^j}.
	\end{equation*}
	In particular, notice that
	\begin{equation*}
	N-\sum_{k=1}^{5}\lfloor n_k^{1-\alpha_i}\rfloor\,\lfloor n_k^{\alpha_i}\rfloor\leq2^{10} N^{\left(1-\alpha_i\right)^{5}}\ll\bigl(N^{\alpha_i/2}\bigr).
	\end{equation*}
	Then, if we further impose that $N^{\alpha_i^2/2}>\rho_{i,1}$, we may associate a choice of points as in the previous construction to every $N_j=\lfloor n_j^{1-\alpha_i}\rfloor\,\lfloor n_j^{\alpha_i}\rfloor$ such that $N_j\gtrsim N^{\alpha_i/2}$, and we do not bother the contribution of the remaining points since, by the latter equation, it is certainly much smaller than $N^{\alpha_i}$. In particular, it is not restrictive to admit repetition of points in the sampling sequence $\mathcal{P}_N$. Finally, it is harmless to sum the contribution of our (at most) five point-configurations and the remainder points, since, for $a_1,\ldots,a_6\geq0$, it holds
    \begin{equation*}
	\left(\sum_{j=1}^{6}a_j\right)^2\leq6\sum_{j=1}^{6}a_j^2.
	\end{equation*}

    By the argument we have just carried over, we may find an integer $N_i\gg q_i$ for which the upper bound in Theorem~\ref{main2} holds, and by choosing $\rho_{i,2}$ large enough, we ensure that the same construction may be carried over for the next consecutive $q_i$ integers after $N_i$.

    \begin{rem}
        Notice that all the constraints needed in the proof of the lower bound and in the proof of the upper bound are ultimately a matter of choosing $\rho_{i,2}$ large enough; this guarantees that we may get the $N_i$ obtained at the end of the two proofs to be the same integer.
    \end{rem}

\appendix
    \section{Auxiliary results}

We start by proving the technical lemma at the end of Subsection~\ref{GeoCon}.
\begin{proof}[Proof of Lemma~\ref{Tec1}]\begin{figure}
        \centering
        \includegraphics[width=0.9\linewidth]{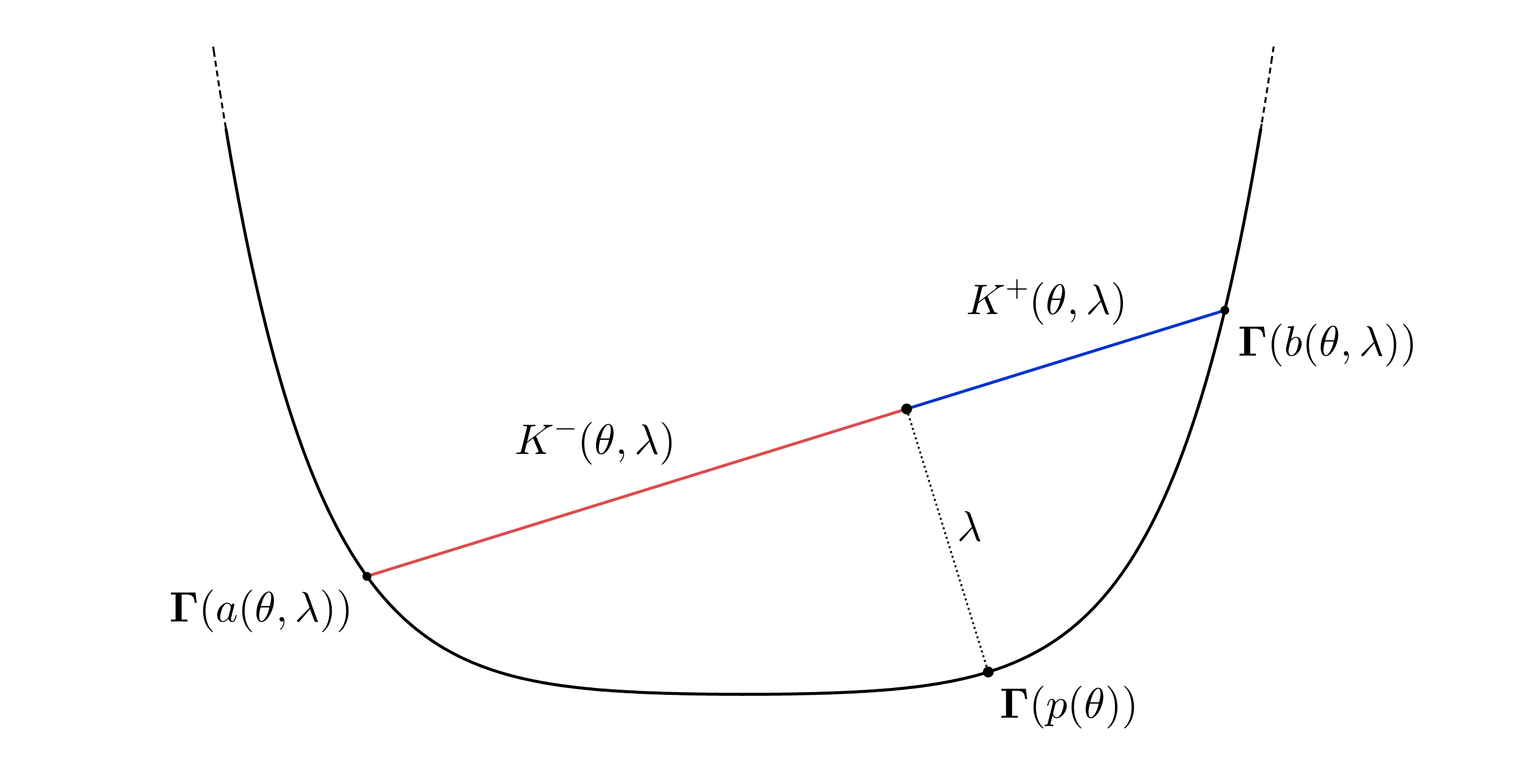}
        \caption{The auxiliary tools we constructed.}
        \label{FigAux}
    \end{figure}
    We need some auxiliary geometric tools to prove Lemma~\ref{Tec1}, and for the sake of notation, we omit $C$ in the subscript. Without loss of generality, we let $\boldsymbol{\Gamma}\colon[-1,1)\to\R^2$ be the counter-clockwise arc-length parametrisation of $\partial C$ such that $\boldsymbol{\Gamma}(0)=\mathbf{0}$, and we write $\boldsymbol{\Gamma}'$ as its velocity (in particular, $\left|\boldsymbol{\Gamma}'\right|\equiv1$).
    For a chord $K(\theta,\lambda)$, we define $a(\theta,\lambda)$ and $b(\theta,\lambda)$ to be the parametrisation by $\boldsymbol{\Gamma}$ of the extreme points of $K(\theta,\lambda)$, with the convention that
    \begin{equation*}
        \boldsymbol{\Gamma}(a(\theta,\lambda))-\boldsymbol{\Gamma}(b(\theta,\lambda))=|K(\theta,\lambda)|\mathbf{u}'(\theta).
    \end{equation*}
    Moreover, we set $p(\theta)$ to be the parametrisation by $\boldsymbol{\Gamma}$ of the point at $\partial C$ whose inner normal is $\uthe$. Finally, we split the chord $K(\theta,\lambda)$ into two semi-chords $K^+(\theta,\lambda)$ and $K^-(\theta,\lambda)$ as displayed in Figure~\ref{FigAux}. Now, choose $\theta_0$ and $\lambda_0$ so that, for $\lambda\in[0,\lambda_0)$ and $\theta\in[\pi/2,\theta_0)$, it holds $\boldsymbol{\Gamma}_2'(b(\theta,\lambda))\leq1/10$. The latter requirement bounds the endpoints of the chords $K(\theta,\lambda)$ to the curve segment constructed in Subsection~\ref{GeoCon}. Thus, we may conclude once we prove that, for $\lambda\in[0,\lambda_0)$ and $\pi/2\leq\theta_1<\theta_2<\theta_0$, it holds
    \begin{equation*}
        \left|K^+(\theta_2,\lambda)\right|\geq\left|K^+(\theta_1,\lambda)\right|\geq\left|K^-(\theta_1,\lambda)\right|.
    \end{equation*}
    We only show the inequality $\left|K^+(\theta_2,\lambda)\right|\geq\left|K^+(\theta_1,\lambda)\right|$, since the argument for the other one is analogous. We begin by noting that, for $i\in\{1,2\}$, it holds
    \begin{equation}\label{Inu1}
        \lambda=\int_{0}^{b(\theta_i,\lambda)-p(\theta_i)}\mathbf{u}(\theta_i)\cdot\boldsymbol{\Gamma}'(p(\theta_i)+s)\de s
    \end{equation}
    and
    \begin{equation}\label{Inu2}
        \left|K^+(\theta_i,\lambda)\right|=\int_{0}^{b(\theta_i,\lambda)-p(\theta_i)}\mathbf{u}(\theta_i-\pi/2)\cdot\boldsymbol{\Gamma}'(p(\theta_i)+s)\de s.
    \end{equation}
    Since the derivative of the argument of $\boldsymbol{\Gamma}'$ is exactly the signed curvature $\kappa$, then for $i\in\{1,2\}$ we obtain
    \begin{equation*}
        \boldsymbol{\Gamma}'(p(\theta_i))=\mathbf{u}(\theta_i-\pi/2)\quad\text{and}\quad\boldsymbol{\Gamma}'(p(\theta_i)+s)=\mathbf{u}\left(\theta_i-\pi/2+\int_0^s\kappa(p(\theta_i)+t)\de t\right),
    \end{equation*}
    so that, by some basic trigonometry, we may rewrite \eqref{Inu1} and \eqref{Inu2} as
    \begin{equation}\label{Inu3}
        \lambda=\int_{0}^{b(\theta_i,\lambda)-p(\theta_i)}\sin\left(\int_0^s\kappa(p(\theta_i)+t)\de t\right)\de s
    \end{equation}
    and
    \begin{equation}\label{Inu4}
        \left|K^+(\theta_i,\lambda)\right|=\int_{0}^{b(\theta_i,\lambda)-p(\theta_i)}\cos\left(\int_0^s\kappa(p(\theta_i)+t)\de t\right)\de s.
    \end{equation}
    Notice that, by our construction of the curve segment $P$ in Subsection~\ref{GeoCon}, it holds the inequality
    \begin{equation*}
        \kappa(p(\theta_1)+t)\geq\kappa(p(\theta_2)+t)\quad\text{for}\quad t\in\left[0, b(\theta_0,\lambda_0)-p(\theta_2)\right),
    \end{equation*}
    but this, combined with \eqref{Inu3} and the monotonicity of the sine function in $[0,\pi/2)$, implies that
    \begin{equation*}
        b(\theta_1,\lambda)-p(\theta_1)\leq b(\theta_2,\lambda)-p(\theta_2),
    \end{equation*}
    and in turn, combining the latter with \eqref{Inu4} and the monotonicity of the cosine function in $[0,\pi/2)$, implies
    \begin{equation*}
        \left|K^+(\theta_2,\lambda)\right|\geq\left|K^+(\theta_1,\lambda)\right|.
    \end{equation*}
    \end{proof}

    Before proceeding with the proof of Proposition~\ref{Aux}, we first state the following auxiliary result. We will not present its proof here, as it is a simple exercise; however, the reader may find a proof in \cite[Lemma~8.1]{beretti2025fouriertransformplanarconvex}.
    \begin{lem}\label{r2}

    Let $\alpha$ and $\beta$ be positive numbers, and let $g\colon[0,+\infty)\to[0,+\infty)$ be such that
    \begin{equation*}
    g(x)\approx\begin{cases}
	   x^\alpha& \textnormal{if}\quad 0\leq x< 1\\
	   x^\beta& \textnormal{if}\quad x\geq 1 
    \end{cases}.
    \end{equation*}
    If $x_y$ is such that $g(x_y)=y$, then it holds
    \begin{equation*}
    x_y\approx\begin{cases}
	   y^{1/\alpha} & \textnormal{if}\quad 0\leq y<1 \\
	   y^{1/\beta} & \textnormal{if}\quad y\geq1
    \end{cases}.
    \end{equation*}
    \end{lem}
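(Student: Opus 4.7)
The plan is to prove the lemma by direct inversion, splitting by the size of $y$ and checking compatibility across the boundary $y \approx 1$. Unpacking the hypothesis, there exist constants $0 < c_1 \leq c_2$ with $c_1 x^\alpha \leq g(x) \leq c_2 x^\alpha$ on $[0,1)$, and analogous constants $0 < c_1' \leq c_2'$ with $c_1' x^\beta \leq g(x) \leq c_2' x^\beta$ on $[1,+\infty)$. All implicit constants appearing below depend only on these four quantities and on $\alpha, \beta$.

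For the case $0 \leq y < 1$, I would first suppose that the preimage satisfies $x_y \in [0,1)$. Then the two-sided inequality $c_1 x_y^\alpha \leq y \leq c_2 x_y^\alpha$ rearranges to $(y/c_2)^{1/\alpha} \leq x_y \leq (y/c_1)^{1/\alpha}$, which is exactly $x_y \approx y^{1/\alpha}$. If instead $x_y \geq 1$, then $y = g(x_y) \geq c_1' x_y^\beta \geq c_1'$, so $y$ is squeezed between $c_1'$ and $1$, meaning $y \approx 1$ and hence $y^{1/\alpha} \approx 1$; at the same time $1 \leq x_y \leq (y/c_1')^{1/\beta} \leq (1/c_1')^{1/\beta}$, so $x_y \approx 1 \approx y^{1/\alpha}$.

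For the case $y \geq 1$, I would argue symmetrically. If $x_y \geq 1$, the bound $c_1' x_y^\beta \leq y \leq c_2' x_y^\beta$ yields $x_y \approx y^{1/\beta}$ in one line. If instead $x_y \in [0,1)$, then $y = g(x_y) \leq c_2 x_y^\alpha < c_2$, so $y$ is pinched into $[1, c_2)$, i.e., $y \approx 1$; simultaneously $(1/c_2)^{1/\alpha} \leq (y/c_2)^{1/\alpha} \leq x_y < 1$, giving $x_y \approx 1 \approx y^{1/\beta}$.

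I do not anticipate a real obstacle: the only subtle point is that $g$ is not assumed monotonic, so several preimages of a given $y$ may coexist, but the above argument treats each preimage uniformly. The "hard" step, if any, is simply the bookkeeping at the transition $y \approx 1$, which is absorbed by the observation that in the off-diagonal sub-cases both $y$ and $x_y$ are pinned to constants, so the two candidate formulas $y^{1/\alpha}$ and $y^{1/\beta}$ coincide with $x_y$ up to multiplicative constants.
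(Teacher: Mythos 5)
Your argument is correct. The paper itself omits the proof of this lemma, referring the reader to Lemma~8.1 in the cited reference \cite{beretti2025fouriertransformplanarconvex}, so a line-by-line comparison is not possible; but your case split on the location of $x_y$ (rather than on the size of $y$ alone) is the natural way to handle the fact that nothing forces $x_y<1$ when $y<1$, and your observation that the off-diagonal sub-cases pin both $y$ and $x_y$ to an interval of constants --- so that $y^{1/\alpha}\approx 1\approx y^{1/\beta}\approx x_y$ there --- correctly absorbs the transition region. The remark that $g$ need not be monotone, so that several preimages may coexist but each is handled uniformly, is a worthwhile point to record explicitly. One could add, for completeness, that if a given off-diagonal sub-case is vacuous (e.g.\ $c_1'\ge 1$ makes the set $\{y<1,\ x_y\ge 1\}$ empty), the argument is unaffected, and that the endpoint $y=0$ forces $x_y=0$ since $g$ is bounded below by $c_1 x^\alpha$ (or $c_1' x^\beta$) away from the origin.
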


    We proceed with the proof of Proposition~\ref{Aux}, and by symmetry, we may restrict ourselves to the case $\theta\geq0$. Since the curvature of the graph of $y=|x|^\beta$ decreases as one moves away from the origin, then, by arguing as in the proof of Lemma~\ref{Tec1}, we obtain that there exist positive small values $\tilde\lambda$ and $\tilde\theta$ such that, for every $\lambda\in[0,\tilde\lambda)$ and $\theta\in[0,\tilde\theta)$, it holds
    \begin{equation*}
        \left|K^+_{A(\beta)}(\theta+\pi/2,\lambda)\right|\geq \left|K^-_{A(\beta)}(\theta+\pi/2,\lambda)\right|.
    \end{equation*}
    In this same range, the endpoints of the chord $K(\theta+\pi/2,\lambda)$ are bounded to the part of $\partial A (\beta)$ that coincides with the graph of $y=|x|^\beta$. Thus, it is enough to establish the estimates in Proposition~\ref{Aux} for $K^+_{A(\beta)}(\theta+\pi/2,\lambda)$ with $\lambda\in[0,\tilde\lambda)$ and $\theta\in[0,\tilde\theta)$.
    
    \begin{proof}[Proof of Proposition~\ref{Aux}]
    Without loss of generality, let $\boldsymbol{\Gamma}_{A(\beta)}\colon[-1,1)\to\R^2$ be the counter-clockwise arc-length parametrisation of $\partial A(\beta)$ such that $\boldsymbol{\Gamma}_{A(\beta)}(0)=\mathbf{0}$, and let $b$ and $p$ be defined analogously as in the proof of Lemma~\ref{Tec1}. For the sake of notation, we set $x_o(\theta)$ to be the abscissa of $\boldsymbol{\Gamma}_{A(\beta)}(p(\theta))$ and $x_+(\theta,\lambda)$  to be the abscissa of $\boldsymbol{\Gamma}_{A(\beta)}(b(\theta,\lambda))$. First, by some basic geometric calculations, we note that, for $\lambda\in[0,\tilde\lambda)$ and for $\theta\in[0,\tilde\theta)$, it holds
	\begin{equation*}
	\left|K_{A(\beta)}^+(\theta+\pi/2,\lambda)\right|\approx x_+(\theta+\pi/2,\lambda)-x_o(\theta+\pi/2).
	\end{equation*}
    For the sake of brevity, we will write $x_o=x_o(\theta+\pi/2)$ and $x_+=x_+(\theta+\pi/2,\lambda)$. Now, notice that $x_+$ is the abscissa of the (right) intersection of the curve $y=|x|^\beta$ with the straight line
	\begin{equation*}y=(x-x_o)\beta x_o^{\beta-1}+x_o^\beta+\frac{\lambda}{\cos\theta}.
	\end{equation*}
    Equating, and with the normalisation $z=\frac{x-x_o}{x_o}$, we get to the equation \begin{equation}\label{e3}
		f(z)=|z+1|^{\beta}-z\beta-1=\frac{\lambda}{x_o^\beta\cos\theta},
	\end{equation}
	and we note that, for $\theta\in[0,\tilde\theta)$, both $x_o$ and $\cos\theta$ are non-negative. By applying Taylor's formula with integral remainder to $f$, we obtain
	\begin{equation*}
	f(z)=\beta(\beta-1)\int_{0}^{z}(1+t)^{\beta-2}(z-t)\de t.
	\end{equation*}
	Notice that for $z\in[0,1)$ it holds
	\begin{equation*}
	\int_{0}^{z}(1+t)^{\beta-2}(z-t)\de t\approx \int_{0}^{z}(z-t)\de t\approx z^2.
	\end{equation*}
	On the other hand, for $z\in[1,+\infty)$ it holds
	\begin{equation*}
	\begin{split}
		\int_{0}^{z}(1+t)^{\beta-2}(z-t)\de t&=\int_{0}^{z/2}(1+t)^{\beta-2}(z-t)\de t+\int_{\frac{z}{2}}^{z}(1+t)^{\beta-2}(z-t)\de t\\
		&\approx z\int_{0}^{z/2}(1+t)^{\beta-2}\de t+z^{\beta-2}\int_{\frac{z}{2}}^{z}(z-t)\de t\\
		&=\frac{z}{\beta-1}\left(\left(1+z/2\right)^{\beta-1}-1\right)+z^{\beta-2}\frac{z^2}{8}\approx_\beta z^{\beta}.
	\end{split}
	\end{equation*}
	Hence, we get
	\begin{equation*}
	f(z)\approx_\beta\begin{cases}
		z^2&\text{if}\quad0\leq z<1\\
		z^\beta&\text{if}\quad z\geq1
	\end{cases},
	\end{equation*}
	and if we consider \eqref{e3}, by applying Lemma~\ref{r2}, and by the fact that for $\theta\in[0,\tilde\theta)$ it holds $\cos\theta\approx1$, it follows that
	\begin{equation}\label{e4}
	\frac{x_+-x_o}{x_o}\approx_\beta\begin{cases}
		\lambda^{1/2}x_o^{-\beta/2}&\text{if} \quad 0\leq \lambda x_o^{-\beta}<1\\
		\lambda^{1/\beta}x_o^{-1}&\text{if}\quad \lambda x_o^{-\beta}\geq1
	\end{cases}.
	\end{equation}
	Finally, by the definition of $x_o$, we have 
	\begin{equation*}
		\beta x_o^{\beta-1}=\left.\frac{d}{dx}x^\beta\right|_{x=x_o}=\tan\theta,
	\end{equation*}
	and therefore, we get that for $\theta\in[0,\tilde\theta)$ it holds
 \begin{equation*}
     x_o\approx \theta^{\frac{1}{\beta-1}}.
 \end{equation*}
 The conclusion follows by a rearrangement of the terms in \eqref{e4}.
\end{proof}

\section*{Acknowledgements} I am grateful to my advisors, Luca Brandolini, Leonardo Colzani, Giacomo Gigante, and Giancarlo Travaglini, for their support and all the valuable discussions.

\bibliography{main.bib}
\bibliographystyle{alpha}

\end{document}